\title{Growth of $p$th means of analytic and subharmonic functions in the unit disc and angular distribution of zeros}
\author{Igor Chyzhykov}
\begin{document}

\maketitle
\def\th{\theta}
\def\ve{\varepsilon}
\def\vfi{\varphi}
\def\Z{\Bbb Z}
\def\N{\Bbb N}
\def\R{\Bbb R}
\newcommand{\D}{\mathbb{D}}
\newcommand{\C}{\mathbb{C}}
\newcommand{\Bl}{\Bigl(}
\newcommand{\Br}{\Bigr)}
\newcommand{\Bm}{\Bigl|}
\newcommand{\z}{\zeta}
\newcommand{\A}{\mathcal{A}}
\newcommand{\rr}[1]{re^{i{#1}}}
\newcommand{\inp}{\int_0^{2\pi}}

\newtheorem{theorem}{Theorem}
\newtheorem{lemma}{Lemma}
\newtheorem{proposition}{Proposition}
\newtheorem{corollary}[theorem]{Corollary}

\newtheorem{atheorem}{Theorem}
\renewcommand{\theatheorem}{\Alph{atheorem}}
\newtheorem{alemma}{Lemma}[atheorem]
\renewcommand{\thealemma}{\Alph{atheorem}}

\newtheorem{rem}[theorem]{Remark}

\newtheorem{ex}[theorem]{Example}

\def\arg{\mathop{\mbox{\rm arg}}}
\def\Arg{\mathop{\mbox{Arg}}}

\def\th{\theta}
\def\vfi{\varphi}
\def\si{\sigma}
\def\Z{\mathbb Z}
\def\N{\mathbb N}
\def\R{\mathbb R}
\def\C{\mathbb C}
\def\Ga{\Gamma}
\def\ga{\gamma}
\def\de{\delta}
\def\De{\Delta}
\def\om{\omega}
\def\ve{\varepsilon}
\def\be{\beta}
\def\ity{\infty}
\def\a{\alpha}
\def\al{\alpha}
\def\la{\lambda}
\def\La{\Lambda}
\def\vfi{\varphi}
\def\si{\sigma}
\def\Z{\mathbb Z}
\def\N{\mathbb N}
\def\R{\mathbb R}
\def\C{\mathbb C}
\def\D{\mathbb D}
\def\Ga{\Gamma}
\def\ga{\gamma}
\def\de{\delta}
\def\om{\omega}
\def\ve{\varepsilon}
\def\be{\beta}
\def\ity{\infty}
\def\tg{\mathop{\mbox{tg}}}
\def\mes{\mathop{\mbox{mes}}}
\def\arctg{\mathop{\mbox{arctg}}}
\def\arcsin{\mathop{\mbox{arcsin}}}
\def\const{\mathop{\mbox{const}}}
\def\Ln{\mathop{\mbox{Ln}}}
\def\AC{\mathop{{\rm AC}}}
\def\BV{\mathop{{\rm BV}}}
\def\dis{\displaystyle}

\def\hd{H(\D)}
\newcommand{\ti}{\widetilde}
\renewcommand{\Re}{\mathop{\rm Re}}
\renewcommand{\Im}{\mathop{\rm Im}}

\def\supp{\mathop{\mbox{supp}}}
\def\tg{\mathop{\mbox{tg}}}
\def\mes{\mathop{\mbox{mes}}}
\def\arctg{\mathop{\mbox{arctg}}}
\def\arcsin{\mathop{\mbox{arcsin}}}
\def\const{\mathop{\mbox{const}}}
\def\Ln{\mathop{\mbox{Ln}}}
\def\SH{\mathop{\mbox{SH}}}
\def\dis{\displaystyle}
\def\noi{\noindent}

\begin{center}
  \it In memory of Professor Anatolii Grishin
\end{center}

\begin{abstract}
Answering a question of A.Zygmund in  \cite{MR} G.MacLane and L.Rubel described boundedness of $L_2$-norm w.r.t. the argument of $\log |B|$, where $B$ is a Blaschke product. We generalize their results in several directions. We describe growth of $p$th means, $p\in(1, \infty)$,  of subharmonic functions bounded from above in the unit disc. Necessary and sufficient conditions are formulated in terms of the complete measure (of a subharmonic function) in the sense of A.Grishin. We also prove sharp estimates of the growth of $p$th means of analytic and subharmonic functions of finite order in the unit disc.
\end{abstract}

\section{Introduction and main results}
\subsection{Some results on growth and angular distribution of zeros of Blaschke products}
In the present paper we investigate an interplay between zero distribution and growth of analytic functions in the  unit disc $\mathbb{D}=\{z\in \mathbb{C}: |z|<1\}$. Especially we are interested in growth of logarithmic means.

Given a sequence $(a_n)$ in $\D$ such that  $\sum_n(1-|a_n|)<\infty$, we   consider the Blaschke product
\begin{equation}\label{e:bla}
  B(z) = \prod\limits_{n = 1}^\infty   \frac{\overline {a_n} {(a_n  -
z)}}{|a_n|(1 - z\overline {a_n }) }. 
\end{equation}

It was A. Zygmund (see \cite{MR}) who asked to describe those sequences $(a_n)$ in $\D$ that $$I(r)=\frac1{2\pi}
\int_{-\pi}^\pi (\log|B(re^{i\theta})|)^2 \, d\theta $$ is bounded.
In \cite{MR} G.\ Maclane and L.\ Rubel answered this question  using Fourier series method.

\begin{atheorem}[{\cite[Theorem 1]{MR}}] \label{t:MR1} A necessary and sufficient condition that $I(r)$  be bounded is that $J(r)$ be bounded, where
$$ J(r)=\sum_{k=1}^\infty \frac 1{k^2}\Bigl| (r^k-r^{-k})\sum_{|a_n|\le r} \bar a_n^k  + r^k \sum_{|a_n|>r} (\bar a_n ^k - a_n^{-k})\Bigr|^2.$$
\end{atheorem}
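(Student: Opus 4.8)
The plan is to follow the Fourier series method suggested by the very form of $J(r)$: for each fixed $r\in(0,1)$ the function $\theta\mapsto\log|B(re^{i\theta})|$ belongs to $L^2[-\pi,\pi]$, since its only singularities (at the arguments of the zeros lying on $|z|=r$) are logarithmic and hence square-integrable. By Parseval's identity, $I(r)$ equals the sum of the squared moduli of the complex Fourier coefficients of $\log|B(re^{i\cdot})|$, so the whole statement reduces to computing those coefficients and recognizing the summands of $J(r)$.

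I would first treat a single factor $b_n(z)=\frac{\bar a_n(a_n-z)}{|a_n|(1-z\bar a_n)}$, writing $a_n=|a_n|e^{i\alpha_n}$ and starting from $\log|b_n(re^{i\theta})|=\log|a_n-re^{i\theta}|-\log|1-re^{i\theta}\bar a_n|$. Using the expansion $\log|1-w|=-\Re\sum_{k\ge1}w^k/k$ (valid for $|w|<1$), the second term always expands since $|re^{i\theta}\bar a_n|=r|a_n|<1$; for the first term one splits into $r<|a_n|$ (write $\log|a_n|+\log|1-(r/a_n)e^{i\theta}|$) and $r>|a_n|$ (write $\log r+\log|1-(a_n/r)e^{-i\theta}|$). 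Collecting the $\cos k(\theta-\alpha_n)$ terms and converting to complex coefficients via $\cos k(\theta-\alpha_n)=\tfrac12(e^{-ik\alpha_n}e^{ik\theta}+e^{ik\alpha_n}e^{-ik\theta})$, together with the identities $\bar a_n^k=|a_n|^ke^{-ik\alpha_n}$ and $a_n^{-k}=|a_n|^{-k}e^{-ik\alpha_n}$, one finds that the $k$th ($k\ge1$) Fourier coefficient of $\log|b_n(re^{i\cdot})|$ equals $\frac{r^k}{2k}(\bar a_n^k-a_n^{-k})$ when $|a_n|>r$ and $\frac1{2k}(r^k-r^{-k})\bar a_n^k$ when $|a_n|\le r$; the two formulas coincide at $r=|a_n|$, so the boundary case is harmless.

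Summing over $n$, the $k$th Fourier coefficient of $\log|B(re^{i\cdot})|$ becomes
$$c_k(r)=\frac1{2k}\Bigl[(r^k-r^{-k})\sum_{|a_n|\le r}\bar a_n^k+r^k\sum_{|a_n|>r}(\bar a_n^k-a_n^{-k})\Bigr],$$
so that $4|c_k(r)|^2$ is precisely the $k$th summand of $J(r)$. With $c_0(r)=\frac1{2\pi}\int_{-\pi}^{\pi}\log|B(re^{i\theta})|\,d\theta$, Parseval gives
$$I(r)=|c_0(r)|^2+2\sum_{k=1}^\infty|c_k(r)|^2=|c_0(r)|^2+\tfrac12 J(r).$$
It remains to bound $c_0(r)$. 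Since $\log|B|$ is subharmonic and $\le0$ in $\D$, its circular mean $c_0(r)$ is nondecreasing in $r$ and $\le0$; hence $0\le -c_0(r)\le -c_0(0)=-\log|B(0)|=\sum_n\log\frac1{|a_n|}<\infty$ by the Blaschke condition. Thus $c_0$ is bounded on $(0,1)$, and the identity above shows that $I(r)$ is bounded if and only if $J(r)$ is bounded.

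The main obstacle is justifying the interchange of the summation over the factors $b_n$ with the Fourier expansion, i.e.\ showing that $\sum_n\log|b_n(re^{i\cdot})|$ converges in $L^2[-\pi,\pi]$ for each fixed $r$ and that the Fourier coefficients of the sum are the sums of the individual coefficients. This amounts to controlling the tails of the double series in $n$ and $k$; the Blaschke condition $\sum_n(1-|a_n|)<\infty$ combined with the $1/k$ decay of the coefficients is what secures absolute convergence, with extra care required near the radii $r=|a_n|$, where the individual cosine series are only conditionally convergent in $k$.
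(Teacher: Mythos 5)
Your proposal is correct: the computation of the Fourier coefficients of $\log|b_n(re^{i\cdot})|$, the resulting identity $I(r)=c_0(r)^2+\tfrac12 J(r)$ via Parseval, and the boundedness of $c_0(r)$ by Jensen's formula and the Blaschke condition are all sound, and the technical points you flag (interchange of the sum over $n$ with the Fourier expansion, the radii $r=|a_n|$) are routine to fill in by dominated convergence since $\log|B|\le\sum_{n\le N}\log|b_n|\le 0$. This is essentially the same Fourier series method that the paper attributes to MacLane and Rubel for this quoted theorem (the paper itself gives no proof, citing \cite{MR}), so no further comparison is needed.
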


Since it was difficult to check boundedness of $J(r)$ they gave also the following sufficient condition.

Let $n(r,B)$ be the number of zeros in the closed disc  $\overline{D}(0,r)$; here and in what follows $D(z,r)=\{ \zeta\in \mathbb{C}: |\zeta -z|<r\}$.

\begin{atheorem}[{\cite{MR}}] \label{t:MR2}
If
\begin{equation}\label{e:nr_bound2}
n(r, B)=O((1-r)^{-\frac12}),  \quad  r\in (0,1),
\end{equation}
 then $I(r)$ is bounded.
\end{atheorem}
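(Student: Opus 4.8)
The plan is to deduce the result from Theorem~\ref{t:MR1}: it suffices to show that the hypothesis \eqref{e:nr_bound2} forces $J(r)=O(1)$. Write $a_n=r_ne^{i\varphi_n}$ with $r_n=|a_n|$. Since $\bar a_n^k=r_n^ke^{-ik\varphi_n}$ and $a_n^{-k}=r_n^{-k}e^{-ik\varphi_n}$, we have $\bar a_n^k-a_n^{-k}=(r_n^k-r_n^{-k})e^{-ik\varphi_n}$, so the modulus of the $n$th term in the two sums appearing in $J(r)$ equals
\[
v_{n,k}=
\begin{cases}
(r^{-k}-r^k)\,r_n^k, & r_n\le r,\\[1mm]
r^k\,(r_n^{-k}-r_n^k), & r_n>r.
\end{cases}
\]
By the triangle inequality the bracketed expression in $J(r)$ is at most $V_k:=\sum_n v_{n,k}$, whence $J(r)\le\sum_{k\ge1}V_k^2/k^2$. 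Expanding $V_k^2=\sum_{n,m}v_{n,k}v_{m,k}$ and applying the Cauchy--Schwarz inequality in the variable $k$ (with weight $k^{-2}$) gives
\[
J(r)\le\Bigl(\sum_n\sqrt{\psi_n}\Bigr)^2,\qquad
\psi_n:=\sum_{k\ge1}\frac{v_{n,k}^2}{k^2}
=\sum_{k\ge1}\frac{(1-\rho_n^{2k})^2}{k^2}\Bigl(\frac{\sigma_n}{\rho_n}\Bigr)^{2k},
\]
where $\rho_n=\max(r,r_n)$ and $\sigma_n=\min(r,r_n)$. Thus it suffices to prove $\sum_n\sqrt{\psi_n}=O(1)$ uniformly in $r\in(0,1)$.

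The technical core is the uniform bound $\psi_n\le C(1-\rho_n)^2/(1-\sigma_n)$. To obtain it I would split the series at $k_0\sim(1-\rho_n)^{-1}$ and use the elementary inequalities $1-\rho_n^{2k}\le 2k(1-\rho_n)$ (for $k\le k_0$) and $1-\rho_n^{2k}\le1$ (for $k>k_0$); on each range the remaining factor is a partial geometric series in $(\sigma_n/\rho_n)^2$, and both contributions are bounded by $(1-\rho_n)^2\min\{(1-\rho_n)^{-1},(\rho_n-\sigma_n)^{-1}\}$. The passage to $(1-\rho_n)^2/(1-\sigma_n)$ then rests on the observation that $\max\{1-\rho_n,\rho_n-\sigma_n\}\ge\tfrac12(1-\sigma_n)$. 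The delicate regime, and the main obstacle, is $r_n\approx r$: there the naive estimate $1-\rho_n^{2k}\le2k(1-\rho_n)$ alone produces the spurious factor $(\rho_n-\sigma_n)^{-1}$, and one must genuinely exploit the saturation $1-\rho_n^{2k}\le1$ for $k>k_0$ in order to keep $\psi_n$ of the size $1-\rho_n$.

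Granting this bound, so that $\sqrt{\psi_n}\le C^{1/2}(1-\rho_n)/\sqrt{1-\sigma_n}$, I would split the sum according to $r_n\le r$ and $r_n>r$. For the near zeros, using $(1-r_n)^{-1/2}\le(1-r)^{-1/2}$,
\[
\sum_{r_n\le r}\sqrt{\psi_n}\lesssim(1-r)\!\!\sum_{r_n\le r}\frac1{\sqrt{1-r_n}}
\le(1-r)\,\frac{n(r,B)}{\sqrt{1-r}}=(1-r)^{1/2}n(r,B)=O(1)
\]
by \eqref{e:nr_bound2}. For the far zeros, writing the sum as a Stieltjes integral and integrating by parts (the boundary term at $1$ vanishes since $(1-t)n(t,B)\lesssim(1-t)^{1/2}\to0$),
\[
\sum_{r_n>r}\sqrt{\psi_n}\lesssim\frac1{\sqrt{1-r}}\int_r^1(1-t)\,dn(t,B)
\le\frac1{\sqrt{1-r}}\int_r^1 n(t,B)\,dt
\lesssim\frac1{\sqrt{1-r}}\int_r^1\frac{dt}{\sqrt{1-t}}=O(1),
\]
again by \eqref{e:nr_bound2}. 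Hence $\sum_n\sqrt{\psi_n}=O(1)$ uniformly in $r$, so $J(r)=O(1)$, and Theorem~\ref{t:MR1} yields the boundedness of $I(r)$. Note that the exponent $\tfrac12$ enters decisively in both halves of the last step, which reflects the sharpness of the hypothesis \eqref{e:nr_bound2}.
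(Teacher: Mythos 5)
Your argument is correct, but it is not the route the paper takes: there the statement is quoted from \cite{MR} without proof, and the paper's own machinery recovers it (and more) as a special case of Theorem~\ref{t:f_bound}. Your proof runs through Theorem~\ref{t:MR1}: you bound the bracket by $V_k=\sum_n v_{n,k}$ with $v_{n,k}=(1-\rho_n^{2k})(\sigma_n/\rho_n)^k$, decouple by Cauchy--Schwarz in $k$ to get $J(r)\le\bigl(\sum_n\sqrt{\psi_n}\bigr)^2$, and prove the pointwise bound $\psi_n\le C(1-\rho_n)^2/(1-\sigma_n)$; the splitting at $k_0\sim(1-\rho_n)^{-1}$ does give this (note $1-(\sigma_n/\rho_n)^2\ge\rho_n-\sigma_n$ for all moduli, so the geometric-series estimate is legitimate even for zeros of small modulus), and both final sums close exactly as you say: the near zeros via $\sqrt{1-r}\,n(r,B)=O(1)$ and the far zeros via $\sum_{|a_n|>r}(1-|a_n|)=O(\sqrt{1-r})$. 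This is essentially the original Fourier-coefficient (Parseval) argument of MacLane and Rubel. By contrast, the paper's framework derives the result from Theorem~\ref{t:f_bound} with $p=2$, $\gamma=1$: hypothesis \eqref{e:nr_bound2} gives $\lambda(\mathcal{C}(\varphi,\delta))\le\sum_{|a_n|\ge 1-\delta}(1-|a_n|)=O(\sqrt{\delta})$ uniformly in $\varphi$, while Fubini gives $\int_0^{2\pi}\lambda(\mathcal{C}(\varphi,\delta))\,d\varphi=2\pi\delta\,\lambda(\{|\zeta|\ge 1-\delta\})=O(\delta^{3/2})$; multiplying the two estimates yields $\int_0^{2\pi}\lambda^2(\mathcal{C}(\varphi,\delta))\,d\varphi=O(\delta^2)$, which is condition \eqref{e:u_lambda_p}, and the sufficiency half of Theorem~\ref{t:f_bound} (the kernel decomposition plus Lemma~\ref{l:mes_p_est}) then gives $m_2(r,\log|B|)=O(1)$, i.e. $I(r)$ bounded. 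The trade-off: your proof is elementary and self-contained given Theorem~\ref{t:MR1}, but it is wedded to $p=2$ (Parseval) and to Blaschke products; the paper's route requires the complete-measure and kernel apparatus, but it works for all $p\in(1,\infty)$, extends to subharmonic functions and to finite order of growth (Theorems~\ref{t:u_finite}, \ref{t:4}), and it localizes the zero counting to Carleson boxes, hence also detects angular distribution, which the purely radial condition \eqref{e:nr_bound2} cannot.
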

They also noted that \eqref{e:nr_bound2} is equivalent  to the condition $$\sum_{|a_n|>r} (1-|a_n|)=O(\sqrt{1-r}).$$
MacLane and Rubel also proved that \eqref{e:nr_bound2} becomes necessary if all zeros lie on a finitely many rays emanating the origin, but it is not the case in general. After that C.\ N.\ Linden (\cite[Corollary 1]{Li71}) generalized this showing that it is sufficient to require that the zero sequence  is contained in a finite number of Stolz angles with vertices on $\partial \D$. The last assertion is a consequence of the following result.

Let $${\mathcal R}(\rr\vfi, \sigma)=\Bigl\{\zeta:  r\le |\zeta|\le \frac {1+r}2 , |\arg \zeta -\vfi|\le \sigma\Bigr\}. $$

\begin{atheorem}[{\cite[Theorem 1]{Li71}}] 
If $I(r)< M$, $0<r<1$. Then
\begin{gather}
    \label{e:num_gam_est}  \# \{ a_n\in \mathcal R (\rr\vfi, \varkappa (1-r)^{\gamma})\}\le  \begin{cases}  \displaystyle \frac{C\sqrt{M} (1+\sqrt \varkappa)}{r(1-r)^{\frac 12}}, & \gamma \ge 1, \\ \displaystyle
 \frac{C\sqrt{M} (1+\sqrt \varkappa)}{r(1-r)^{1-\frac \gamma 2}}, & 0\le \gamma < 1. \end{cases}
\end{gather}
  \end{atheorem}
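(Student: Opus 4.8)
The plan is to turn the presence of many zeros of $B$ inside $\mathcal R(\rr\vfi,\sigma)$, with $\sigma=\varkappa(1-r)^{\gamma}$, into a lower bound for an $L^1$-mean of $-\log|B|$ over a short arc, and then to combine this with the hypothesis $I(\rho)\le M$ through the Cauchy--Schwarz inequality. Write $b_n(z)=\frac{\overline{a_n}(a_n-z)}{|a_n|(1-z\overline{a_n})}$ for the factors in \eqref{e:bla}, $\psi_n=\arg a_n$, and $N=\#\{a_n\in\mathcal R(\rr\vfi,\sigma)\}$. Since every $\log|b_n|\le0$ in $\D$, for all $z$
\begin{equation*}
-\log|B(z)|\ \ge\ \sum_{a_n\in\mathcal R(\rr\vfi,\sigma)}\bigl(-\log|b_n(z)|\bigr),
\end{equation*}
so only the factors attached to zeros in $\mathcal R$ need to be kept. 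I would evaluate everything on the circle $|z|=\rho$ with $\rho=\frac{3+r}{4}$, which lies strictly outside the outer radius $\frac{1+r}{2}$ of $\mathcal R$; then $1-\rho$, $\rho-|a_n|$ and $1-|a_n|$ are all comparable to $1-r$ for each $a_n\in\mathcal R$.

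Next I would produce a uniform lower bound for a single factor on an arc about $\psi_n$. From $-\log|b_n(z)|=\frac12\log\Bl1+\frac{(1-|a_n|^2)(1-|z|^2)}{|a_n-z|^2}\Br)$ and $|a_n-\rho e^{i\theta}|^2=(\rho-|a_n|)^2+4\rho|a_n|\sin^2\frac{\theta-\psi_n}{2}$, the choice of $\rho$ makes numerator and denominator both comparable to $(1-r)^2$ whenever $|\theta-\psi_n|\le1-r$ (here one uses $\rho-|a_n|\ge\frac{1-r}{4}$ and $\rho|a_n|\le1$), so that $-\log|b_n(\rho e^{i\theta})|\ge c_0$ there for an absolute constant $c_0>0$ (a short computation bounds the ratio below by $\tfrac2{25}$, giving $c_0=\frac12\log\frac{27}{25}$ or the like). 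Integrating over that arc yields $\int_{|\theta-\psi_n|\le1-r}(-\log|b_n(\rho e^{i\theta})|)\,d\theta\ge 2c_0(1-r)$; the essential point is that this bump integral is of order $1-r$ \emph{independently of $\gamma$}, all the $\gamma$-dependence entering later through the length of the arc used in Cauchy--Schwarz.

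Finally I would set $A=[\vfi-\sigma-(1-r),\vfi+\sigma+(1-r)]$, so that $|A|=2(\sigma+1-r)$ and every arc $\{|\theta-\psi_n|\le1-r\}$ with $a_n\in\mathcal R$ lies in $A$. Summing the single-factor bounds and using the pointwise inequality of the first paragraph,
\begin{equation*}
\int_A(-\log|B(\rho e^{i\theta})|)\,d\theta\ \ge\ 2c_0(1-r)\,N .
\end{equation*}
On the other hand $\int_A(-\log|B|)^2\,d\theta\le 2\pi I(\rho)\le2\pi M$, so Cauchy--Schwarz gives $\int_A(-\log|B|)\,d\theta\le\sqrt{2\pi M}\,|A|^{1/2}$. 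Combining the two estimates, $N\le\frac{C\sqrt M\,(\sigma+1-r)^{1/2}}{1-r}$. Substituting $\sigma=\varkappa(1-r)^{\gamma}$ and splitting into $\gamma\ge1$ (where $\sigma\le\varkappa(1-r)$, so $(\sigma+1-r)^{1/2}\le\sqrt{1+\varkappa}\,(1-r)^{1/2}$) and $0\le\gamma<1$ (where $\sigma\ge\varkappa(1-r)$, so $(\sigma+1-r)^{1/2}\le\sqrt{1+\varkappa}\,(1-r)^{\gamma/2}$), together with $\sqrt{1+\varkappa}\le1+\sqrt\varkappa$, reproduces the two branches of \eqref{e:num_gam_est}; since $r\le1$ the extra factor $1/r$ in the stated bound only enlarges its right-hand side, so the claim follows. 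The step to get right is the single-factor estimate of the second paragraph: one must choose the radius of integration so that the relevant geometric quantities stay comparable \emph{uniformly in $r$ and in $a_n\in\mathcal R$}, which is precisely what forces the separation of the outer radius $\frac{1+r}{2}$ of $\mathcal R$ from the circle $|z|=\rho$.
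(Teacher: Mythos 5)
Your proposal addresses Theorem C, which this paper only quotes from Linden's article \cite[Theorem 1]{Li71} and does not prove, so there is no in-paper argument to compare against; I can only assess your proof on its own merits, and it is essentially correct. The core chain checks out: the identity $-\log|b_n(z)|=\frac12\log\bigl(1+\frac{(1-|a_n|^2)(1-|z|^2)}{|a_n-z|^2}\bigr)$ is right, and with $\rho=\frac{3+r}{4}$ one indeed gets $\rho-|a_n|\in[\frac{1-r}{4},\frac{3(1-r)}{4}]$ and $1-|a_n|\ge\frac{1-r}{2}$ for all $a_n\in\mathcal R$ and \emph{all} $r\in(0,1)$, so on the arc $|\theta-\psi_n|\le 1-r$ the ratio is at least $\frac{(1-r)^2/8}{(9/16+1)(1-r)^2}=\frac2{25}$, giving the uniform bump $c_0=\frac12\log\frac{27}{25}$. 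Summing the integrals of the nonnegative functions $-\log|b_n|$ over their (possibly overlapping) subarcs of $A$ and comparing with $\int_A(-\log|B|)$ is legitimate, and the Cauchy--Schwarz step plus the two-case substitution $\sigma=\varkappa(1-r)^\gamma$ reproduces both branches; since your bound omits the harmless factor $1/r\ge1$, it is in fact slightly stronger than the stated one.

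The only gap worth flagging is the wrap-around case: if $\varkappa(1-r)^\gamma+(1-r)>\pi$, then $|A|>2\pi$ and the inequality $\int_A(\log|B|)^2\,d\theta\le 2\pi I(\rho)$ as written does not make sense for an arc longer than the circle. This is easily repaired: replace $A$ by the full circle, so that the lower bound $2c_0(1-r)N$ and the upper bound $(2\pi)^{1/2}(2\pi M)^{1/2}$ both hold for $\int_0^{2\pi}(-\log|B(\rho e^{i\theta})|)\,d\theta$, and since in this regime $(\sigma+1-r)^{1/2}>\sqrt{\pi}$, the resulting estimate $N\le \frac{C\sqrt{M}}{1-r}$ is still dominated by $\frac{C\sqrt{M}(\sigma+1-r)^{1/2}}{1-r}$, so both branches of \eqref{e:num_gam_est} survive unchanged. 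With that one sentence added, your argument is a complete and self-contained elementary proof of the quoted theorem.
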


  Results of MacLane and Rubel show that the order of magnitude of the first estimate  \eqref{e:num_gam_est} is the best possible. Linden (\cite{Li71}) also established sharpness of the  estimate for $\gamma \in [0,1)$.

The main growth characteristic which will be studied here is  $p$th integral mean of $\log |f|$, where $f$ is analytic in $\D$.  Since our approach is natural for subharmonic functions    we introduce means for the class of subharmonic functions in $\D$. Note that $\log |f|$ is subharmonic provided that $f$ is analytic.  Characterization of zeros of analytic functions $f$ with $\log |f|\in L^p(\D)$ is obtained in~\cite{BrOC}.

For  a subharmonic function $u$ in $\D$ and   $p\ge 1$ we define
\begin{gather*}
    m_p(r,u)=\biggl( \frac 1{2\pi} \int_0^{2\pi} |u(\rr\th)|^p\, d\th\biggr)^{\frac 1p}, \quad 0<r<1, \\
    \rho_p[u]=\limsup _{r\uparrow 1}\frac{\log^+ m_p(r,u)}{-\log(1-r)}.
\end{gather*}

The growth of $m_p(r, \log|f|)$ was studied in many papers, for instance  \cite{Li_mp}, \cite{Li_mp1},  \cite{VM}, \cite{Sto83}, \cite{Gar88}, \cite{Sto89}, \cite{Chmm}, \cite{Chy_Ska}.  Nevertheless, to the best of our knowledge, only one  paper, namely \cite{VM}, contains criteria of boundedness of $p$th means when $u=\log|B|$. Unfortunately, proofs have not been  published yet.

In \cite{VM} Ya.V. Mykytyuk and Ya.V. Vasyl'kiv  introduced two auxiliary functions defined on $\partial \D$ by $(a_n)$:
$$ \psi_r(\zeta)=\sum_{r\le |a_n|<1} \frac{(1-|a_n|)^2}{|\zeta-a_n|^2},\quad \zeta \in \partial \D, r\in [0,1).$$
and $\vfi(\zeta)$, which  satisfies the relation
$$ \varphi(\zeta) \asymp \Phi(\zeta):= \# \{ a_n : |1-a_n \bar\zeta|< 2(1-|a_n|)\}, $$
i.e. the number of zeros in the Stolz angle with the vertex $\zeta$.
They established  that $\psi_0$ and $\Phi$ belong to the same classes $L^p(\partial \mathbb{D})$, $p\in [1, \infty)$, and $\psi_0 \log |\psi_0|$ and $\Phi\log |\Phi|$ belong to $L^1(\partial \mathbb{D})$, simultaneously. Moreover, for a branch of $\log B$ in $\D$ with the radial cuts $[a_k, \frac{a_k}{|a_k|})$ the following statement holds.


\begin{atheorem} [\cite{VM}] 
Let   $B$ be a Blaschke product, and $p\in (1, \infty)$.
Then:
\begin{itemize}
  \item [1)]  $m_p(r, \log B)$ is bounded on $[0,1)$ if and only if $\psi_0 \in L^p(\partial D)$.
  \item [2)] $m_1(r, \log B)$  is bounded if and only if $\psi_0 \log ^+ \psi_0 \in L^1(\partial \D)$.
\item [3)] $m_p(r, \log |B|)$ is bounded on $[0,1)$ if and only if $$
\sup_{0<r<1}  \int_{0}^{2\pi}\biggl( \int_{0}^{2\pi} \frac{1-r^2}{|\rr\theta -e^{i\vfi}|^2} \psi_r(e^{i\theta}) d\theta \biggr)^p\, d\vfi<\infty.$$
\item [4)] $ \psi_0\in L^p(\partial \D)\Rightarrow \sup_{0<r<1}m_p(r, \log|B|)<\infty.$
\item [5)] $ n(r,f)=O((1-r)^{-\frac{1}{p}})\Rightarrow \sup_{0<r<1}m_p(r, \log|B|)<\infty.$
\end{itemize}
\end{atheorem}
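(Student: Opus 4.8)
The plan is to reduce all five assertions to integrability properties of the boundary function $\psi_0$ and of its tails $\psi_r$ by means of the Poisson kernel. Writing $b_a(z)=\frac{\bar a(a-z)}{|a|(1-z\bar a)}$ for a single factor, the two elementary facts I would build on are the identity $1-|b_a(z)|^2=\frac{(1-|z|^2)(1-|a|^2)}{|1-\bar a z|^2}$ together with the comparison $-\log|b_a(z)|\asymp 1-|b_a(z)|^2$ (valid while $|b_a(z)|$ stays bounded away from $0$), and a Poisson computation: expanding in Fourier series one checks that the harmonic extension of the boundary function $e^{i\theta}\mapsto (1-|a|)^2/|e^{i\theta}-a|^2$ to the point $\rr\vfi$ equals $\frac{1-|a|}{1+|a|}\cdot\frac{1-|a|^2r^2}{|1-\bar a\,\rr\vfi|^2}$. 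Summing over the zeros identifies the inner integral in item~3) with a constant multiple of the Poisson extension $P[\psi_r]$ of $\psi_r$ evaluated on $|z|=r$; thus item~3) asserts exactly that $m_p(r,\log|B|)\asymp m_p(r,P[\psi_r])$ uniformly in $r$.

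I would prove item~3) first, as the technical core. For the zeros with $|a_n|\ge r$ one has $1-|a_n|^2r^2\asymp 1-r^2$, so term by term
$$-\log|b_{a_n}(\rr\vfi)|\asymp \frac{(1-r)(1-|a_n|)}{|1-\bar a_n\,\rr\vfi|^2},$$
which matches, up to absolute constants, the corresponding summand of $P[\psi_r](\rr\vfi)$; summing gives the two-sided comparison away from the zeros. The main obstacle is the contribution of the zeros with $|a_n|<r$, which $\psi_r$ does not see. I would show it is harmless by combining the exact circular mean $\frac1{2\pi}\inp\bigl(-\log|b_{a_n}(\rr\vfi)|\bigr)\,d\vfi=\log\frac1r$ (so that the total inner $L^1$-mass is $n(r,B)\log\frac1r\to0$ by the Blaschke condition) with an $L^p$-bound on the local spikes, which stems from $-\log|b_{a_n}(z)|\sim\log\frac1{|z-a_n|}$ near $a_n$ and is locally in every $L^p$ and summable; the logarithmic singularities where $\rr\vfi$ nears some $a_n$ are integrable and absorbed in the same way.

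Items~1)--2) concern the branch $\log B$ with radial cuts $[a_k,a_k/|a_k|)$, for which $\arg B$ is \emph{not} the harmonic conjugate of $\log|B|$: it jumps by $2\pi$ across each cut, which is precisely why the criterion $\psi_0\in L^p$ is strictly stronger than that of item~3). Using $m_p(r,\log B)\asymp m_p(r,\log|B|)+m_p(r,\arg B)$, the size of $\arg B(\rr\cdot)$ is governed by the number of cuts the circle $|z|=r$ meets near a given angle, i.e.\ by the Stolz-angle counting function $\Phi$, and I would replace $\psi_0$ by $\Phi$ via the stated comparison $\psi_0\asymp\Phi$. Both directions then reduce to the $L^p$ behaviour of a Hardy--Littlewood-type maximal function of $\Phi$: for sufficiency, $\psi_0\in L^p$ bounds the modulus through item~3) (since $\psi_r\le\psi_0$) and bounds the argument through the $L^p$-boundedness of the maximal operator; for necessity, boundedness of $\log B$ forces $\arg B$ to be bounded, whence $\Phi(e^{i\theta})$ is controlled from above by a maximal average of $|\log B(\rr\cdot)|$ and one passes to the limit by Fatou. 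At the endpoint $p=1$ the maximal operator fails on $L^1$ and Zygmund's theorem takes over, integrability of the maximal function being equivalent to $\Phi\log^+\Phi\in L^1$, i.e.\ $\psi_0\log^+\psi_0\in L^1$, which is item~2).

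Finally items~4)--5) are corollaries. Item~4) follows from item~3): since $\psi_r\le\psi_0$ and the Poisson integral contracts the $L^p$-norm, $\sup_r m_p(r,P[\psi_r])\le\|\psi_0\|_{L^p}<\infty$. For item~5) I would show that $n(r,B)=O((1-r)^{-1/p})$ forces $\psi_0\in L^p(\partial\D)$ by a direct estimate of $\|\psi_0\|_{L^p}$ in terms of the distribution of the moduli $|a_n|$ — for $p=2$ this is exactly the MacLane--Rubel condition $\sum_{|a_n|>r}(1-|a_n|)=O(\sqrt{1-r})$ — and then invoke item~4). I expect the inner-zero estimate in item~3) and the sharp lower bound underlying the necessity in item~1) to be the hardest points.
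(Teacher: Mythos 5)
First, a structural point: the paper itself contains \emph{no proof} of this statement. It is Theorem D, quoted from Mykytyuk--Vasyl'kiv \cite{VM}, and the author explicitly remarks that the proofs from \cite{VM} ``have not been published yet''; the paper's own theorems are different (Carleson-box) criteria for the complete measure, proved by other machinery. So your proposal must stand on its own merits, and it has two genuine gaps.

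The first gap is in item 3), which you correctly identify as the core. Your Poisson identity is right, and the necessity half of 3) does follow from the universal inequality $1-|b_a|^2\le -2\log|b_a|$. But in the sufficiency direction you declare the zeros with $|a_n|<r$ ``harmless'', and the justification offered (circular mean $\log\frac 1r$, hence total inner $L^1$-mass $n(r,B)\log\frac 1r\to 0$, plus spikes that are ``locally in every $L^p$ and summable'') does not survive multiplicities or clustering: $N$ overlapping logarithmic spikes contribute of order $N^p$ to $\int|\log|B||^p\,d\varphi$, not of order $N$, while the Blaschke condition only controls $N(1-|a_n|)$. Concretely, take a zero of multiplicity $N_j=2^j$ at $a_j=1-\delta_j$ with $\delta_j=2^{-j}j^{-2}$; then $\sum_j N_j\delta_j<\infty$, yet at $r_j=1-\delta_j/2$ one has $-\log|B(r_je^{i\varphi})|\ge c\,N_j$ for $|\varphi|\le \delta_j/4$, so the inner-zero part of the $p$th mean satisfies $\int|\log|B(r_je^{i\varphi})||^p\,d\varphi\ge c\,N_j^p\delta_j = c\,2^{j(p-1)}j^{-2}\to\infty$ for every $p>1$. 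Hence no lemma of the form ``inner zeros give a uniformly bounded $L^p$ error'' holds for general Blaschke products, and the radius-by-radius equivalence $m_p(r,\log|B|)\asymp m_p(r,P[\psi_r])+O(1)$ on which your plan rests is false. (The theorem itself survives in this example because $\psi_{r'}$ for $r'\le |a_j|$ does see the cluster, so the supremum over $r$ on the right-hand side diverges as well; a correct proof must exploit precisely this interplay between different radii, which your plan never does.)

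The second gap is item 5). You propose to deduce it from item 4) via the implication $n(r,B)=O((1-r)^{-1/p})\Rightarrow \psi_0\in L^p(\partial\D)$, and this implication is false. Put all zeros on the ray $[0,1)$ with $n(r,B)\asymp (1-r)^{-1/p}$; then $\psi_0(e^{i\theta})\asymp |\theta|^{-1/p}$, so $\int_{\partial\D}\psi_0^p\,=\infty$, although item 5) (and, for $p=2$, Theorem B of MacLane--Rubel \cite{MR}) guarantees that $m_p(r,\log|B|)$ stays bounded. This is exactly the known distinction between counting-function conditions, which ignore angular distribution, and the condition $\psi_0\in L^p$, which does not: items 4) and 5) are independent sufficient conditions, and neither follows from the other. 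Item 5) needs a direct kernel estimate; compare the paper's Theorem 6, proved via the bound $K(z,\zeta)\le C(1-|z|^2)/|1-z\bar\zeta|^2$, Minkowski's integral inequality, and integration by parts against $n(t,u)$ --- that is the kind of argument required here. Finally, your outline of items 1)--2) (splitting $\log B$ into modulus and argument, replacing $\psi_0$ by the Stolz counting function $\Phi$, maximal-function and $L\log L$ arguments) is a plausible plan but remains a plan: the two-sided control of $m_p(r,\arg B)$ by $\Phi$ is asserted, not proved.
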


%

Relations between conditions on the zeros of a Blaschke product $B$
and the belongness of $\arg B(e^{i\th})$ to $L^p$ spaces $0<p\le
\infty$ were investigated by A. Rybkin (\cite{Ry}).

The following tasks arise naturally:
\begin{itemize}
  \item [i)] Describe the growth of $p$th means of $\log|f|$ where $f$ is a bounded analytic function in  $\D$, $1<p<\infty$.
  \item [ii)] Find more `explicit' conditions on zero distribution than  that of Theorems~A and D.
  \item [iii)] Extend the description on functions of finite order of  growth.
  \item [iv)] Find simple conditions that provide a prescribed growth of $m_p(r,\log|f|)$.
\end{itemize}

In the paper  we accomplish these tasks.

\subsection{Complete measure  and main results}
Our method is based on a concept of so the called \emph{complete measure} of a subharmonic function introduced by A. Grishin in the case of the half-plane (see \cite{Gr1}, \cite{FG}). As it was mentioned there, this concept allows to obtain very simple representation for a subharmonic function of finite order and defines this function up to a harmonic addend in the closure of the domain.

Let $SH^\infty $ be the class of  subharmonic functions in
$\D$ bounded from above. In  particular, $\log|f|\in SH^\infty
$ if $f\in H^\infty$, the space of bounded analytic functions in $\D$. In this case  (cf. \cite[Ch.3.7]{HK})
\begin{equation}\label{e:uzob}
  u(z)=\int_{\D} \log\frac{|z-\zeta|}{|1-z\bar\zeta|} d\mu_u(\zeta)- \frac1{2\pi}\int_{\partial \D} \frac{1-|z|^2}
  {|\zeta-z|^2}
  d\psi(\zeta) +C.
\end{equation}
where $\psi$ is a positive Borel  measure, $\mu_u$ is
 the Riesz measure of $u$ (\cite{HK}), and $\int_{\D} (1-|\zeta|) d\mu_u(\zeta)<\infty$.
The \emph{complete measure
$\lambda_u$ of $u$ in the sense of Grishin} is defined \cite{Gr1,FG} by the boundary measure  and the Riesz measure
of $u(z)$. But, since \cite{CL} $$\lim_{r\uparrow1}
\int_{\theta_1}^{\theta_2} \int_{\D} \log\frac{|\rr\th-\zeta|}{|1-\rr \th\bar\zeta|}  d\mu_u(\zeta) d\th =0, \quad -\pi\le
\th_1<\th_2\le \pi,$$
i.e. the boundary values of the first integral from (\ref{e:uzob})  do not contribute to the boundary measure,
we can define $\lambda_u$ of a Borel set
$M\subset \overline{\D}$ such that $M\cap\partial{\D}$ is measurable with respect to Lebesgue measure on $\partial \D$ by
\begin{equation}\label{e:cm}
  \lambda_u(M)= \int_{\D\cap M} (1-|\zeta|)\, d\mu_u(\zeta) + \psi(M\cap \partial \D).
\end{equation}
The measure $\lambda=\lambda_u$ has the following properties:

\begin{itemize}
  \item [(1)] $\lambda$ is finite on $\overline\D$;
  \item [(2)] $\lambda$ is non-negative;
  \item  [(3)] $\lambda$ is a zero
measure outside $\overline{\D}$; \item[(4)]
$d\lambda\Bigr|_{\partial \D}(\zeta)=d\psi(\zeta)$; \item[(5)]
$d\lambda\Bigr|_{\D}(\zeta)= \ (1-|\zeta|)\, d\mu_u(\zeta)$.
\end{itemize}

If $ B$ is a Blaschke product of
form (\ref{e:bla}), then $\lambda_{\log| B|}(M)= \sum_{a_n\in M}
(1-|a_n|)$.

Let $$\mathcal{C}(\vfi, \delta)=\{ \zeta \in \overline{\mathbb{D}}: |\zeta|\ge 1-\delta, |\arg \zeta -\varphi |
\le \pi \delta\}$$
be the Carleson box based on the arc $[e^{i(\varphi-\pi\delta)}, e^{i(\vfi+\pi\delta)}]$.

The following theorem describes the growth of integral means for $u\in SH^\infty$.
\begin{theorem} \label{t:u_bound}
Let $u\in SH^\infty$, $\gamma \in (0,1]$, $p\in (1, \infty)$. Let $\lambda$ be the complete measure of $u$. Necessary and sufficient that
\begin{equation}\label{e:u_m_p_est}
    m_p(r,u)=O((1-r)^{\gamma-1}), \quad r\uparrow 1,
\end{equation}
hold is that
\begin{equation}\label{e:u_lambda_p}
    \biggl( \inp \lambda^p(\mathcal{C}(\vfi, \de))\, d\vfi \biggr)^{\frac 1p} =O(\delta^\gamma), \quad 0<\delta<1.
\end{equation}
\end{theorem}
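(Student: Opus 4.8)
The plan is to reduce both \eqref{e:u_m_p_est} and \eqref{e:u_lambda_p} to a single kernel estimate against the complete measure $\la=\la_u$ and then run the two implications through a Carleson--box decomposition. Starting from \eqref{e:uzob} I write $u(z)=C-U(z)$, where
$$ U(z)=\int_{\D}\log\frac{|1-z\bar\z|}{|z-\z|}\,d\mu_u(\z)+\frac1{2\pi}\int_{\partial\D}\frac{1-|z|^2}{|\z-z|^2}\,d\psi(\z)\ge0, $$
both summands being non-negative. Since $\ga\le1$, the constant $C$ contributes only $O(1)=O((1-r)^{\ga-1})$ to $m_p$ and $O(\de)=O(\de^\ga)$ to the left side of \eqref{e:u_lambda_p}, so it suffices to compare $m_p(r,U)$ with the box quantity. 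Using $d\la=(1-|\z|)\,d\mu_u$ on $\D$ and $d\la=d\psi$ on $\partial\D$, the identity $|1-z\bar\z|^2=|z-\z|^2+(1-|z|^2)(1-|\z|^2)$, and the elementary bounds $\frac{a}{1+a}\le\log(1+a)$, $\log(1+a)\asymp a$ $(a\lesssim1)$, I write $U(z)=\int_{\overline\D}\ti K(z,\z)\,d\la(\z)$ with
$$ \ti K(z,\z)\asymp\frac{1-|z|^2}{|1-z\bar\z|^2}\asymp\frac{1-|z|}{(1-|z|+1-|\z|)^2+|\arg z-\arg\z|^2} $$
away from the pseudohyperbolic diagonal $|z-\z|^2\lesssim(1-|z|^2)(1-|\z|^2)$, while on it $\ti K$ carries only an extra integrable logarithmic factor $\log\frac{|1-z\bar\z|}{|z-\z|}$; in all cases $\ti K\gtrsim\frac{1-|z|^2}{|1-z\bar\z|^2}$.

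For necessity, fix $\de\in(0,1)$, set $t=1-r=\de$, and test at $z_\vfi=(1-\de)e^{i\vfi}$. For every $\z\in\mathcal C(\vfi,\de)$ one has $1-|\z|\le\de$ and $|\arg\z-\vfi|\le\pi\de$, hence $|1-z_\vfi\bar\z|^2\lesssim\de^2$ and $\ti K(z_\vfi,\z)\gtrsim1/\de$ throughout the box. Integrating only over $\mathcal C(\vfi,\de)$ gives the pointwise bound
$$ \la(\mathcal C(\vfi,\de))\lesssim\de\,U(z_\vfi)\lesssim\de\bigl(|u(z_\vfi)|+|C|\bigr). $$
Raising to the $p$th power and integrating in $\vfi$ yields $\inp\la^p(\mathcal C(\vfi,\de))\,d\vfi\lesssim\de^p\,m_p(1-\de,u)^p+\de^p$, and \eqref{e:u_m_p_est} together with $\de^p\le\de^{p\ga}$ gives exactly \eqref{e:u_lambda_p}.

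For sufficiency I bound $U(\rr\th)$ from above by decomposing $\overline\D$ into Carleson boxes adapted to the scale $t=1-r$. Writing $\La_\de(\th)=\la(\mathcal C(\th,\de))$, the kernel estimate shows that mass at depth $\asymp2^kt$ and angular distance $\asymp2^lt$ from $\rr\th$ is weighted by $\asymp t/((2^kt)^2+(2^lt)^2)$, so after grouping the boxes
$$ U(\rr\th)\lesssim\frac1t\sum_{k\ge0}2^{-2k}\La_{2^kt}(\th)+\frac1t\sum_{j\ge1}j^{-3}\La_{jt}(\th), $$
the second (Poisson-type) sum arising from the tails after summation by parts. Taking the $L^p(d\th)$ norm, applying Minkowski's inequality, and inserting $\|\La_\de\|_p=O(\de^\ga)$ reduces everything to the numerical series $\sum_k2^{k(\ga-2)}$ and $\sum_j j^{\ga-3}$, both convergent since $\ga\le1<2$; this gives $m_p(r,U)=O(t^{\ga-1})$, i.e. \eqref{e:u_m_p_est}.

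The main obstacle is the near-diagonal logarithmic part of the Green kernel, which is \emph{not} controlled pointwise by the box mass when $\mu_u$ has atoms (as it does for $u=\log|f|$). I expect to handle it by integrating first in $\th$: for fixed $\z$ the circular average $\inp\log\frac1{|\rr\th-\z|}\,d\th=-2\pi\log\max(r,|\z|)$ is bounded, and $\th\mapsto\log\frac1{|\rr\th-\z|}$ lies in $L^p$ with norm $O(1)$, so by Minkowski's integral inequality the diagonal contribution to $m_p(r,U)$ is of the scale-$t$ box type $t^{-1}\La_{Ct}$ and is absorbed into the first sum above. Verifying this absorption, together with the matching lower kernel bounds on the diagonal used in the necessity step, is the delicate point of the argument.
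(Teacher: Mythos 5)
Your necessity argument is correct and is essentially the paper's own: both rest on the fact that the two kernels in \eqref{e:uzob} preserve sign, plus a lower bound for the Green/Poisson kernel on a Carleson box (the paper uses property d) of Proposition 1 on the half-scale box $\mathcal{C}(\vfi,\frac{1-r}2)$; your route via $\log(1+a)\ge a/(1+a)$, which gives $K(z,\z)\ge\frac{1-|z|^2}{2|1-z\bar\z|^2}$ everywhere, works equally well). Likewise the off-diagonal half of your sufficiency proof --- dyadic boxes $E_n=\mathcal{C}(\th,2^nt)$, kernel $\lesssim t/(2^nt)^2$ on $E_{n+1}\setminus E_n$, Minkowski for sums, and $\|\La_{2^nt}\|_{L^p}=O((2^nt)^\ga)$ --- matches the paper's treatment of $u_2$ and is fine.

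The gap is exactly at the point you flag as delicate, and your proposed fix does not close it. Writing $t=1-r$, $\La_\de(\th)=\la(\mathcal{C}(\th,\de))$, and $A_t=\{\z:\,||\z|-r|\le \frac13 t\}$, a global application of Minkowski's integral inequality to the near-diagonal part gives
\begin{equation*}
\Bigl(\inp|u_1(\rr\th)|^p\,d\th\Bigr)^{\frac1p}\le\int_{A_t}\frac{1}{1-|\z|}\,\Bigl\|\log\Bigl|\tfrac{1-\rr\th\bar\z}{\rr\th-\z}\Bigr|\Bigr\|_{L^p(d\th)}\,d\la(\z),
\end{equation*}
i.e.\ each unit of mass contributes its full $L^p(d\th)$ norm regardless of its angular position: the angular localization of $\la$ is destroyed and only the \emph{global} annulus mass $\la(A_t)$ survives, not the box mass $\La_{Ct}(\th)$. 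Even using the sharp estimate $\bigl\|\log|\cdot|\bigr\|_{L^p(d\th)}\le C(p)(1-|\z|)^{1/p}$ coming from the inequality of \cite{5} that the paper uses (your $O(1)$ bound for the log factor is lossier still), the right-hand side is $\asymp t^{1/p-1}\la(A_t)$; and the most that \eqref{e:u_lambda_p} yields for the annulus mass is $\la(A_t)\lesssim t^{\ga-1}$ (Fubini: $\inp\La_{Ct}(\vfi)\,d\vfi\gtrsim t\,\la(A_t)$, then H\"older). So you end with $O(t^{\ga-2+1/p})$, missing the target $O(t^{\ga-1})$ by the unbounded factor $t^{1/p-1}$. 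The loss is intrinsic to the global use of Minkowski: it is sharp when $\la|_{A_t}$ sits at a single angle, but overestimates by exactly $t^{-1/p'}$ when the mass is spread uniformly along the annulus --- which is an admissible configuration under \eqref{e:u_lambda_p}. This is precisely the difficulty the paper's proof is designed to overcome by a different mechanism: H\"older's inequality applied so that the \emph{local} mass $\mu^{p-1}\bigl(\Box(re^{i\arg\z},\frac23(1-r))\bigr)$ survives as a weight against $d\mu(\z)$ (estimate \eqref{e:key_place}), then Fubini and the $L^p$-log inequality, and finally the key Lemma \ref{l:mes_p_est}, which converts $\int\nu^{p-1}((\th-\de,\th+\de))\,d\nu(\th)$ into $\frac{2^{p+1}}\de\int\nu^p((\th-\de,\th+\de))\,d\th$ --- exactly the quantity hypothesis \eqref{e:u_lambda_p} controls. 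Your route could in principle be repaired without that lemma: partition $A_t$ into $\asymp 1/t$ boxes $Q_j$ of angular width $t$, note that $u_1(\rr\th)$ involves only the $O(1)$ boxes adjacent to $\th$ (the pseudohyperbolic restriction), apply Minkowski \emph{inside each} $Q_j$ with the sharp $(1-|\z|)^{1/p}$ norm, and then sum $p$th powers over $j$ using this bounded overlap. But both needed ingredients --- localization before Minkowski and the $(1-|\z|)^{1/p}$ estimate --- are absent from your sketch, and with what is written the diagonal estimate fails.
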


\begin{theorem} \label{t:f_bound}
Let $f\in H^\infty$, $\gamma \in (0,1]$, $p\in (1, \infty)$. Let $\lambda$ be the complete measure of $\log |f|$. Necessary and sufficient that
\begin{equation}\label{e:m_p_est}
    m_p(r,\log|f|)=O((1-r)^{\gamma-1}), \quad r\uparrow 1,
\end{equation}
hold is that \eqref{e:u_lambda_p}.
\end{theorem}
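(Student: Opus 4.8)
The plan is to obtain Theorem~\ref{t:f_bound} as a direct specialization of Theorem~\ref{t:u_bound}. The point is that the hypothesis $f\in H^\infty$ puts $u:=\log|f|$ into the class $SH^\infty$ covered by Theorem~\ref{t:u_bound}, so that, once that theorem is established for general subharmonic functions bounded from above, no genuinely new analysis is needed.

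First I would verify the membership $u=\log|f|\in SH^\infty$. Since $f$ is analytic and $f\not\equiv 0$, the function $\log|f|$ is subharmonic in $\D$; and since $|f(z)|\le\|f\|_\infty$ for every $z\in\D$, we have $u(z)\le\log\|f\|_\infty<\infty$, so $u$ is bounded from above. After the normalization $f\mapsto f/\|f\|_\infty$ we may even assume $u\le 0$, whence $|u|=-u=\log(1/|f|)$ and the growth in \eqref{e:m_p_est} is produced entirely by the negative part of $u$. Consequently $u$ admits the representation \eqref{e:uzob} with a positive boundary measure $\psi$ and Riesz measure $\mu_u$, and its complete measure $\lambda=\lambda_{\log|f|}$ is well defined by \eqref{e:cm}. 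Then applying Theorem~\ref{t:u_bound} to $u=\log|f|$, for which \eqref{e:u_m_p_est} is precisely \eqref{e:m_p_est}, yields the asserted equivalence of \eqref{e:m_p_est} with \eqref{e:u_lambda_p}.

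To make the condition \eqref{e:u_lambda_p} transparent in the analytic setting, I would record what $\lambda$ is. By the canonical factorization $f=cBSO$ into a Blaschke product $B$, a singular inner factor $S$ and an outer factor $O$, the Riesz measure of $\log|f|$ is the counting measure $\sum_n \delta_{a_n}$ of the zero sequence $(a_n)$; hence by properties (4)--(5) the restriction of $\lambda$ to $\D$ equals $\sum_n (1-|a_n|)\,\delta_{a_n}$, while its restriction to $\partial\D$ equals $\psi$, the sum of the singular measure of $S$ and the positive absolutely continuous part generated by the boundary modulus $-\log|f^*|$. In particular, if $f=B$ is a Blaschke product then $\lambda(M)=\sum_{a_n\in M}(1-|a_n|)$, and \eqref{e:u_lambda_p} turns into an explicit size restriction on the zeros lying in the Carleson boxes $\mathcal{C}(\vfi,\de)$.

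Finally, I should be honest that there is essentially no new obstacle here: all the difficulty is absorbed into Theorem~\ref{t:u_bound}, and the present statement is merely its specialization to $u=\log|f|$. The only points demanding a little care are (i) that the representation \eqref{e:uzob} is available for $\log|f|$ with a \emph{nonnegative} boundary measure, which follows from $\log|f^*|\le 0$ after the normalization $\|f\|_\infty\le 1$, and (ii) that the complete measure assembled in \eqref{e:cm} from $\mu_u$ and $\psi$ is exactly the $\lambda$ appearing in \eqref{e:u_lambda_p}. Both are immediate from the factorization above, so the proof reduces to invoking Theorem~\ref{t:u_bound}.
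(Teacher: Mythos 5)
Your reduction is exactly the paper's own route: the paper never writes a separate proof of Theorem~\ref{t:f_bound}, treating it as the specialization of Theorem~\ref{t:u_bound} to $u=\log|f|$, which lies in $SH^\infty$ precisely because $f\in H^\infty$ (a membership the paper records when introducing $SH^\infty$). Your supplementary remarks on the canonical factorization and the explicit form of $\lambda$ are consistent with the paper's definition \eqref{e:cm} and introduce no gap.
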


{\bf Remark.} It was proved in \cite{Chmm} that if $\mathop{\rm supp} \lambda \subset \partial \D$, i.e., $u$ is harmonic,   $\gamma\in (0,1)$ then \eqref{e:u_lambda_p} is equivalent to \eqref{e:u_m_p_est}.

{\bf Remark.} Though Theorems 1 and 2 look like Carleson type results we cannot  use standard tools (e.g. \cite[Chap.9]{Dur}) here, because $u$ and $\log |f|$  have logarithmic singularities.

The crucial role in the  proof of sufficiency  plays Lemma \ref{l:mes_p_est}.
In order to prove necessity of Theorems 1 and 2 we essentially use the fact  that kernels in representation  \eqref{e:uzob} preserve the sign. The method allows to spread the sufficient part of Theorems 1 and 2 to functions of finite order of growth (see Theorems \ref{t:u_finite}, \ref{t:4} below).

Under additional assumptions on  zero location of a Blaschke product (support of the Riesz measure of the Green  potential) \eqref{e:lambda_p} could be simplified.
\begin{theorem} \label{t:u_supp_stoz}
Let
\begin{equation}\label{e:u_can_int_bl}
u(z)=\int_{\D} \log\frac{|z-\zeta|}{|1-z\bar\zeta|} d\mu_u(\zeta), \quad \int_{\D} (1-|\zeta|) d\mu_u(\zeta)<\infty,
\end{equation}
$\alpha \in [0,1)$, $p\in (1, \infty)$, $\alpha+\frac 1p$.
Suppose that $\supp \mu_u$ is contained in a finite number of Stolz angles with vertices on $\partial \D$. Necessary and sufficient that
\begin{equation}\label{e:u_m_p_est}
    m_p(r,u)=O((1-r)^{-\alpha})), \quad r\uparrow 1,
\end{equation}
hold is that
\begin{equation}\label{e:nru_stolz}
n(r,u):=\mu_u( \overline{D(0,r)}=O((1-r)^{-\alpha-\frac 1p}), r\uparrow 1.
\end{equation}
\end{theorem}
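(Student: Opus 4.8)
The plan is to deduce Theorem~\ref{t:u_supp_stoz} from Theorem~\ref{t:u_bound} by converting both the integral-means condition and the counting condition into one intermediate condition on the total mass of $\lambda$ near $\partial\D$. First I would note that the kernel $\log\frac{|z-\z|}{|1-z\bar\z|}$ is non-positive on $\D\times\D$, so the potential $u$ in~\eqref{e:u_can_int_bl} satisfies $u\le 0$; hence $u\in SH^\infty$ and its complete measure carries no boundary part, i.e. $\lambda=\lambda_u$ is supported in $\D$ with $d\lambda(\z)=(1-|\z|)\,d\mu_u(\z)$. Applying Theorem~\ref{t:u_bound} with $\ga=1-\al\in(0,1]$, condition~\eqref{e:u_m_p_est} is equivalent to the Carleson-box estimate~\eqref{e:u_lambda_p}, namely $\bigl(\inp \lambda^p(\mathcal C(\vfi,\de))\,d\vfi\bigr)^{1/p}=O(\de^{1-\al})$. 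It therefore remains to show, under the Stolz-angle hypothesis, that this estimate is equivalent to~\eqref{e:nru_stolz}.

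The bridge is the boundary mass $\Lambda(\de):=\lambda(\{|\z|\ge 1-\de\})=\int_{1-\de}^1(1-t)\,dn(t)$, and the first step is to prove that~\eqref{e:nru_stolz} is equivalent to $\Lambda(\de)=O(\de^{1-\al-1/p})$ (throughout I read the truncated hypothesis of the theorem as $\al+\frac1p<1$). Integrating by parts and using $(1-t)n(t)\to 0$, valid precisely when $\al+\frac1p<1$, gives $\Lambda(\de)=-\de\,n(1-\de)+\int_{1-\de}^1 n(t)\,dt$, so~\eqref{e:nru_stolz} yields $\Lambda(\de)\le\int_{1-\de}^1 n(t)\,dt=O(\de^{1-\al-1/p})$. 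Conversely, decomposing $\D$ into the dyadic annuli $A_k=\{1-2^{-k}\le|\z|<1-2^{-k-1}\}$, on which $1-|\z|\asymp 2^{-k}$, gives $\mu_u(A_k)\le 2^{k+1}\lambda(A_k)\le 2^{k+1}\Lambda(2^{-k})\le C\,2^{k(\al+1/p)}$; summing the geometric series, which is dominated by its last term because $\al+\frac1p>0$, yields $n(1-2^{-K})\le C\,2^{K(\al+1/p)}$, i.e.~\eqref{e:nru_stolz} after interpolating over dyadic scales by monotonicity of $n$.

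The second step is the geometric comparison, and this is where the Stolz-angle hypothesis enters. Write $\supp\mu_u\subset\bigcup_{j=1}^N S_j$, where $S_j$ is a Stolz angle with vertex $e^{i\th_j}$; thus every $\z\in S_j$ with $|\z|\ge 1-\de$ obeys $|\arg\z-\th_j|\le C_j\de$. For the upper bound, $\lambda(\mathcal C(\vfi,\de))\le\Lambda(\de)$ always, and $\lambda(\mathcal C(\vfi,\de))=0$ unless $\vfi$ lies within $(\pi+C_j)\de$ of some $\th_j$; hence the integrand is supported in a $\vfi$-set of measure $\le C\de$ and $\inp\lambda^p(\mathcal C(\vfi,\de))\,d\vfi\le C\de\,\Lambda(\de)^p$. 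For the lower bound, for each $j$ there is an interval of $\vfi$ of length $\asymp\de$ centred at $\th_j$ on which $\mathcal C(\vfi,K\de)$, with a fixed $K=K(C_j)$, contains all of $S_j\cap\{|\z|\ge 1-\de\}$, so $\lambda(\mathcal C(\vfi,K\de))\ge\Lambda_j(\de):=\lambda(S_j\cap\{|\z|\ge 1-\de\})$ there. Combining the two bounds with the Carleson estimate at scales $\de$ and $K\de$, which differ only by a constant factor since $(K\de)^{1-\al}\asymp\de^{1-\al}$, shows that~\eqref{e:u_lambda_p} is equivalent to $\Lambda(\de)=\sum_j\Lambda_j(\de)=O(\de^{1-\al-1/p})$, completing the chain of equivalences.

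The main obstacle is the bookkeeping in this last geometric step: matching the angular aperture of a Stolz angle at level $1-\de$, which is $\asymp\de$, to the width $2\pi\de$ of a Carleson box, so that one box simultaneously captures essentially all of the near-boundary mass of a given $S_j$ while remaining localised to a $\vfi$-neighbourhood of length $\asymp\de$. For wide Stolz angles this forces the scale adjustment $\de\mapsto K\de$ above and requires checking that all the constants $C_j,K$ depend only on the fixed apertures and not on $\de$; once this is verified, the finiteness of $N$ makes the summation over $j$ harmless. A secondary point to check is the vanishing of the boundary term $(1-t)n(t)$ in the integration by parts, which is exactly the role of the standing assumption $\al+\frac1p<1$.
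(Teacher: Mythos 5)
Your proof is correct and follows essentially the same route as the paper's: both directions reduce to Theorem \ref{t:u_bound}, and the Stolz-angle hypothesis is then used exactly as in the paper -- to localize the Carleson boxes $\mathcal{C}(\vfi,\de)$ to $\vfi$-intervals of length $\asymp\de$ (giving the factor $\de^{1/p}$ between the $L^p$ box condition and the near-boundary mass), with dyadic decompositions converting that mass into the counting function $n(r,u)$. The only cosmetic differences are your intermediate quantity $\Lambda(\de)$ with an integration by parts, where the paper sums over the truncated sectors $\mathcal{R}$, and your reading $\al+\frac1p<1$ of the garbled hypothesis, which matches the paper's intended condition (both arguments need it at the corresponding convergence step).
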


Similar to the case $\alpha=0$, the growth  condition  \eqref{e:nru_stolz} is appeared to be sufficient for \eqref{e:u_m_p_est} when $u$ is of finite order (see Theorem \ref{t:u_suf_groth_al} below).

{\bf Remark.} Taking $u=\log |B|$, we obtain a generalization of MacLane and Rubel, and Linden's results mentioned in Subsection 1.1.

{\bf Remark.} If $\alpha+\frac 1p \ge 1$ correlations \eqref{e:u_m_p_est} and  \eqref{e:nru_stolz}  become trivial, see remark after Theorem \ref{t:4}.

\subsection{Growth and zero distribution of zeros of finite order}
In order to formulate results on angular distribution for unbounded analytic functions we need some growth characteristics.
The standard characteristics are the maximum modulus $M(r,f)=\max\{|f(z)|: |z|=r\}$, and the Nevanlinna characteristic (\cite{Ha}) $ T(r,f)=\frac1{2\pi}\int_0^{2\pi}\log ^+|f(re^{i\theta})| \, d\theta$, $x^+=\max\{x, 0\}$. Note that both of them  are  bounded for $f=B$. Note that the order defined by $T(r,f)$ coincides with $\rho_1[\log |f|]$.

It follows from results of C.Linden \cite{L_rep} that $M(r,f)$ does take into account the angular distribution of the zeros when it grows sufficiently fast, namely, when
the order of growth
$$\rho_M[f]=\limsup_{r\uparrow 1} \frac{\log ^+
\log^+ M(r,f)}{-\log(1- r)} \ge 1.  
$$
To be more precise, consider the canonical product
$$\mathcal{P}(z, (a_k), s):= \prod_{k=1}^\infty E(A(z, a_k), s),$$
where  $$E(w,s)=(1-w)\exp \{w+
w^2/2 +\dots+ w^s/s\},\; s\in \Z_+,$$ is the Weierstrass primary
factor, $A(z, \zeta)=\dfrac{1-|\zeta|^2}{1-z\bar\zeta}$, $z\in \mathbb{D}$, $\zeta\in \overline{\mathbb{D}}$.

Let $$\tilde\square(\rr\vfi):= {\mathcal R}\Bigl(\rr\vfi , \frac {1-r}2\Bigr),$$
$\nu(\rr\vfi)$ be the number of zeros of $\mathcal{P}$ in $\tilde\square(\rr\vfi)$.
We define
\begin{equation}\label{e:mu_ord}
    \nu_1(\vfi)=\limsup_{r\uparrow1} \frac{\log ^+ \nu(\rr\vfi, \mathcal{P})}{-\ln (1-r)}, \quad \nu[\mathcal{P}]=\sup_\vfi \nu_1(\vfi).
\end{equation}

 With the notation above we have (\cite[Theorem V]{L_rep})
\begin{gather}
    \label{e:rom}  \rho_M[\mathcal{P}] \begin{cases} =\nu[\mathcal{P}], & \rho_M[\mathcal{P}]\ge 1, \\
\le \nu[\mathcal{P}]\le 1, & \rho_M[\mathcal{P}]< 1. \end{cases}
\end{gather}


Given a Borel measure  $\mu$ on $\mathbb{D}$ satisfying
\begin{equation}\label{e:blashke_con_s}
\int_{\D} (1-|\zeta|)^{s+1}\, d\mu_f(\zeta)<\infty,  \quad s\in \mathbb{N}\cup \{ 0\},
\end{equation}
   define the canonical integral as
\begin{equation}\label{e:can_int}
U(z; \mu, s):=\int_{\mathbb{D}} \log | E(A(z, \zeta), s)| \, d\mu(\zeta).
\end{equation}

Let $(  q >-1)$ $$S_{\al}(z)=\Gamma(1+q)\Bigl(
 \frac 2{(1-z)^{q+1}} -1\Bigr), \quad P_q(z)
=\Re S_q(z), \quad S_q(0)=\Gamma(q+1).$$
Note that $S_0$ and $P_0$ are the Schwarz and Poisson kernels, respectively.

Let $u$ be  a subharmonic function in $\D$  of the  form
\begin{gather}
u(z)= U(z; \mu,s )-   \frac 1{2\pi}\int\limits_0^{2\pi}
{P_s(ze^{-i\theta})}{d\psi^*(\theta )}+ C, \label{e:u_bar_rep}
\end{gather}
where
$\psi^*\in BV[0,2\pi]$, $\mu$ is the Riesz measure of $u$ satisfying \eqref{e:blashke_con_s}.
Note that every subharmonic function $u$ of finite order in $\D$, i.e. satisfying $\log \max\{ u(z): |z|=r\}=O(\log \frac1 {1-r})$ $(r\uparrow 1)$, can be represented in the form \eqref{e:u_bar_rep} for an appropriate $s\in \N\cup\{0\}$ (\cite{HK}, \cite[Chap.9]{Dj}).

Let $M$ be  Borel's subset of
$\overline{\D}$ such that $M\cap \partial \mathbb{D}$ is measurable with respect to the Lebesgue measure on $\partial \D$. Let $u$ be  a subharmonic function in $\D$  of the  form \eqref{e:u_bar_rep}. We set
\begin{equation}\label{e:cms}
  \lambda_u(M)= \int_{\D\cap M} (1-|\zeta|)^{s+1}\, d\mu_u(\zeta) + \psi(M\cap \partial \D),
\end{equation}
where $\mu_u$ is the Riesz measure of $u$, $\psi$ is the (signed) Stieltjes  measure associated with $\psi^*$. Note that, in the case $u=\log|f|$ we have
 $\mu_{\log|f|}(\zeta)= \sum_{n} \delta(\zeta-a_n)$, where $(a_n)$ is the zero sequence of $f$.

Let $|\lambda|$ denote  the total variation of $\lambda$.
\begin{theorem}\label{t:u_finite}
Let $u$ be a subharmonic function in $\D$ of the form \eqref{e:u_bar_rep}, $\gamma \in (0,s+1]$, $p\in (1, \infty)$. Let $\lambda$ be defined by \eqref{e:cms}. If
\begin{equation}\label{e:lambda_p_mod}
    \biggl( \inp |\lambda|^p(\mathcal{C}(\vfi, \de))\, d\vfi \biggr)^{\frac 1p} =O(\delta^\gamma), \quad 0<\delta<1,
\end{equation}
holds, then
\begin{equation}\label{e:m_p_est}
    m_p(r,u)=O((1-r)^{\gamma-s-1}), \quad r\uparrow 1.
\end{equation}
\end{theorem}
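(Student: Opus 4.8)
The plan is to estimate $m_p(r,u)$ directly from \eqref{e:u_bar_rep}. Writing $z=\rr\th$ and splitting $u$ by the triangle inequality in $L^p(d\th)$ into the canonical integral $U(z;\mu,s)$, the boundary potential $\frac1{2\pi}\int_0^{2\pi}P_s(ze^{-i\sigma})\,d\psi^*(\sigma)$ and the constant $C$, it suffices to bound each summand in $L^p(d\th)$ by $O((1-r)^{\gamma-s-1})$. The part of $\mu$ carried by $\{|\zeta|\le\frac12\}$ gives a term that stays bounded as $r\uparrow1$, so I may assume $\mu$ lives in $\{\frac12\le|\zeta|<1\}$; throughout I pass from $\mu$ to the complete measure by $(1-|\zeta|)^{s+1}\,d\mu=d\lambda|_{\D}$ and from $\psi^*$ by $d\psi=d\lambda|_{\partial\D}$.

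The analytic input is the classical two--sided bound for the Weierstrass factor with $w=A(z,\zeta)$, $|w|=\frac{1-|\zeta|^2}{|1-z\bar\zeta|}<1+|\zeta|<2$. For $|w|\le\frac12$ one has $\bigl|\log|E(w,s)|\bigr|\le C_s|w|^{s+1}=C_s\frac{(1-|\zeta|^2)^{s+1}}{|1-z\bar\zeta|^{s+1}}$, whereas for $|w|>\frac12$ one writes $\log|E(w,s)|=\log|1-w|+\Re\sum_{k=1}^s\frac{w^k}{k}$ with bounded polynomial part. The sole singularity is at $w=1$, i.e.\ $\zeta=z$, and is read off from $|1-A(z,\zeta)|=|\zeta|\,\rho(z,\zeta)$, $\rho$ the pseudohyperbolic distance; I localise it to $\{|1-A|<\frac12\}$, which for $|\zeta|\ge\frac12$ is a pseudohyperbolic ball about $z$, hence a disc $D(z,c(1-r))$ on which $|1-z\bar\zeta|\asymp1-|\zeta|\asymp1-r$.

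These bounds collapse everything except the genuine singularity onto the single potential $\mathcal{K}(z):=\int_{\overline{\D}}|1-z\bar\zeta|^{-(s+1)}\,d|\lambda|(\zeta)$: the range $|w|\le\frac12$ of the canonical integral is $\le C\mathcal{K}$; the boundary term is $\le C\mathcal{K}$ since $|P_s(ze^{-i\sigma})|\le C|1-z\bar\zeta|^{-(s+1)}$ for $\zeta=e^{i\sigma}$; and on $\{|w|>\frac12\}$ the bounded pieces contribute $O(\mu(\{|w|>\frac12\}))\le C\int|w|^{s+1}\,d\mu\le C\mathcal{K}(z)$ because $|w|\gtrsim1$ there. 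The estimate $\bigl(\inp\mathcal{K}^p(\rr\th)\,d\th\bigr)^{1/p}=O((1-r)^{\gamma-s-1})$ is precisely what Lemma~\ref{l:mes_p_est} delivers from \eqref{e:lambda_p_mod}, so this portion is routine once the reductions are set up.

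The decisive term is the logarithmic one, $W(z):=\int_{D(z,c(1-r))}\log^+\frac{1-r}{|\zeta-z|}\,d\mu(\zeta)$; a single point mass already shows that \eqref{e:lambda_p_mod} forces $\|W\|_{L^p(d\th)}$ to be exactly of order $(1-r)^{\gamma-s-1}$, leaving no slack, so a scale--by--scale argument (which loses a logarithmic factor) cannot succeed and one must work in $L^p$ at once. Using $1-|\zeta|\asymp1-r$ on the disc to write $W(z)\asymp(1-r)^{-(s+1)}\int_0^{c(1-r)}t^{-1}\lambda(D(z,t))\,dt$ and Minkowski's integral inequality, the whole matter reduces to the sub-estimate
\[
\Bigl(\inp\lambda^p\bigl(D(\rr\th,t)\bigr)\,d\th\Bigr)^{1/p}\le C\,t^{1/p}(1-r)^{\gamma-\frac1p},\qquad 0<t\le1-r,
\]
since then $\int_0^{c(1-r)}t^{\frac1p-1}\,dt\cdot(1-r)^{\gamma-\frac1p}\asymp(1-r)^\gamma$. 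I expect this sub-estimate to be the main obstacle. To prove it I would tile $\partial\D$ into arcs of length $t$, so that over each arc the discs $D(\rr\th,t)$ lie in one window $\widetilde W_m$ of angular and radial size $\asymp t$ at depth $1-r$; the $\widetilde W_m$ are pairwise disjoint, and grouping them into the scale--$(1-r)$ Carleson boxes $\mathcal{C}(\vfi_k,c(1-r))$ containing them gives $\sum_m\lambda(\widetilde W_m)\le\lambda(\mathcal{C}(\vfi_k,c(1-r)))$ within each group. Applying $\|\cdot\|_{\ell^p}\le\|\cdot\|_{\ell^1}$ inside each box --- the step that exploits disjointness and restores the sharp power $t^{1/p}$ --- and summing the $p$th powers of the box masses by \eqref{e:lambda_p_mod} yields the sub-estimate, and therefore \eqref{e:m_p_est}.
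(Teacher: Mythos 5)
Your proposal is sound in substance, and its central mechanism is genuinely different from the paper's. The paper makes the same near/far splitting (over the pseudohyperbolic disc $D^*(z,\tfrac17)$ and its complement), but it handles the logarithmic singularity by H\"older's inequality, Fubini, the inequality $\int_{-\pi}^{\pi}\bigl|\log\bigl|\tfrac{a-e^{i\theta}}{b-e^{i\theta}}\bigr|\bigr|^{p}\,d\theta\le C(p)|a-b|$, and then Lemma~\ref{l:mes_p_est}, which converts the resulting measure-against-measure integral $\int\lambda^{p-1}\bigl(\Box(\rr\th,\cdot)\bigr)\,d\lambda$ into the Lebesgue integral $\int\lambda^{p}\bigl(\Box(\rr\th,\cdot)\bigr)\,d\theta$, controlled by \eqref{e:lambda_p_mod} at the single scale $1-r$. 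You replace that entire package by the layer-cake formula for the log kernel, Minkowski's integral inequality, and the multi-scale sub-estimate $\bigl(\inp\lambda^p(D(\rr\th,t))\,d\th\bigr)^{1/p}\lesssim t^{1/p}(1-r)^{\gamma-\frac1p}$ for $t\le 1-r$. That sub-estimate is correct and sharp (your point-mass check is right: there is no slack), and your tiling proof of it goes through with two routine repairs: the windows $\widetilde{W}_m$ are not pairwise disjoint but have bounded overlap (adjacent discs $D(\rr\th,t)$ protrude into neighbouring tiles; split the family into three disjoint subfamilies or accept a factor $3$), and the discrete sum $\sum_k\lambda^p(\mathcal{C}(\vfi_k,c(1-r)))$ over a grid must be tied to the integral hypothesis \eqref{e:lambda_p_mod} by averaging $\vfi$ over an interval of length $\asymp 1-r$ around each $\vfi_k$. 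What your route buys is a self-contained $L^p$ argument that never invokes Lemma~\ref{l:mes_p_est} or the quoted $L^p$ log-integral inequality; what the paper's route buys is brevity, since Lemma~\ref{l:mes_p_est} dispatches the singular part in a few lines.

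One attribution in your far-part treatment is wrong, although the estimate you need there is true. The bound $\bigl(\inp\mathcal{K}^p(\rr\th)\,d\th\bigr)^{1/p}=O((1-r)^{\gamma-s-1})$ for $\mathcal{K}(z)=\int_{\overline{\D}}|1-z\bar\zeta|^{-(s+1)}\,d|\lambda|(\zeta)$ is \emph{not} what Lemma~\ref{l:mes_p_est} delivers: that lemma compares $\int\nu^{p-1}\,d\nu$ with $\int\nu^{p}\,d\theta$ and says nothing about this potential. The tool that actually proves it is the one the paper uses for its $u_2$: decompose $\overline{\D}$ into dyadic Carleson boxes $E_n=\mathcal{C}(\vfi,2^n(1-r))$, bound the kernel by $C(2^n(1-r))^{-(s+1)}$ on $E_{n+1}\setminus E_n$, and sum across scales in $L^p$; Minkowski gives $\sum_n (2^n(1-r))^{\gamma-s-1}$, a convergent geometric series when $\gamma<s+1$. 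This is indeed routine, but it is a different mechanism from the one you name, and at the endpoint $\gamma=s+1$ the scale-sum produces an extra factor $\log\frac1{1-r}$, so the far part (in your sketch and, for that matter, in the paper's own $u_2$ estimate, whose H\"older weight $2^{-n/2}$ even requires $\gamma<s+\tfrac12$ for the series written there to converge) needs genuine extra care; your near-part argument, by contrast, is uniform up to and including $\gamma=s+1$.
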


\begin{theorem}\label{t:4}
Let $f$ be of the form
\begin{gather}
f(z)= C_q z^\nu \mathcal{P}(z, (a_k),q) \exp\Bigl\{ \frac 1{2\pi}\int\limits_0^{2\pi}
{S_q(ze^{-i\theta})}{d\psi^*(\theta )}\Bigl\}, \label{e:bar_rep}
\end{gather}
where    $\psi^*\in BV[0,2\pi]$, $(a_k)$ is the zero  sequence of
$f$ such that $\sum_k (1-|a_k|)^{q+1} <+\infty$, $\nu \in \Z_+$, $C_q\in \C$. Let $\gamma \in (0,s+1]$, $p\in (1, \infty)$. Let $\lambda$ be defined by \eqref{e:cms} for $u=\log |f|$. If
\begin{equation}\label{e:lambda_p_f}
    \biggl( \inp |\lambda|^p(\mathcal{C}(\vfi, \de))\, d\vfi \biggr)^{\frac 1p} =O(\delta^\gamma), \quad 0<\delta<1,
\end{equation}
holds,  then
\begin{equation}\label{e:m_p_est_f}
    m_p(r,\log|f|)=O((1-r)^{\gamma-s-1}), \quad r\uparrow 1.
\end{equation}
\end{theorem}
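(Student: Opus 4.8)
The plan is to deduce Theorem~\ref{t:4} from Theorem~\ref{t:u_finite} by recognizing $\log|f|$ as a subharmonic function of the form \eqref{e:u_bar_rep} with $s=q$. Taking the logarithm of the modulus in \eqref{e:bar_rep} and using $P_q=\Re S_q$ together with $\log|\mathcal{P}(z,(a_k),q)|=\sum_k\log|E(A(z,a_k),q)|=U(z;\mu,q)$ for $\mu=\sum_k\delta_{a_k}$, I obtain
\[
\log|f(z)| = U(z;\mu,q) + \frac{1}{2\pi}\int_0^{2\pi} P_q(ze^{-i\theta})\,d\psi^*(\theta) + \log|C_q| + \nu\log|z|.
\]
The first two terms constitute a function $\tilde u(z)$ that is exactly of the form \eqref{e:u_bar_rep} with $s=q$, constant $C=\log|C_q|$ folded in, and the boundary density taken with the opposite sign; the remaining term $\nu\log|z|$ accounts for the factor $z^\nu$.

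First I would verify that the complete measure of $\tilde u$ produced by \eqref{e:cms} has, on every Carleson box $\mathcal{C}(\vfi,\delta)$ with $\delta<1$, the same total variation as the measure $\lambda$ of the hypothesis. On $\D$ both carry the density $(1-|\zeta|)^{s+1}\,d\mu$ with $\mu=\sum_k\delta_{a_k}$, while on $\partial\D$ the associated Stieltjes measures differ only by sign, so $|\lambda_{\tilde u}|=|\lambda|$; the sign flip is harmless precisely because \eqref{e:lambda_p_f} and \eqref{e:lambda_p_mod} are phrased in terms of $|\lambda|$. The only genuine discrepancy is the point mass $\nu\delta_0$ in the Riesz measure coming from $z^\nu$, but since $\mathcal{C}(\vfi,\delta)\subset\{|\zeta|\ge 1-\delta\}$ contains no neighbourhood of the origin for $\delta<1$, this mass never enters the Carleson boxes and leaves \eqref{e:lambda_p_f} unchanged. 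Hence the hypothesis of Theorem~\ref{t:4} is identical to that of Theorem~\ref{t:u_finite} applied to $\tilde u$.

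With this identification in hand, Theorem~\ref{t:u_finite} yields $m_p(r,\tilde u)=O((1-r)^{\gamma-s-1})$. It then remains only to restore $\nu\log|z|$: by Minkowski's inequality $m_p(r,\log|f|)\le m_p(r,\tilde u)+m_p(r,\nu\log|\cdot|)$, and since $\nu\log|z|=\nu\log r$ is constant in $\theta$ and tends to $0$ as $r\uparrow 1$, its contribution is $O(1)$. Because $\gamma\le s+1$ forces $\gamma-s-1\le 0$, the bound $O(1)$ is absorbed into $O((1-r)^{\gamma-s-1})$, which gives \eqref{e:m_p_est_f}.

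I expect no serious analytic obstacle here: all the hard estimation---in particular the use of Lemma~\ref{l:mes_p_est}---is already encapsulated in Theorem~\ref{t:u_finite}. The only points demanding care are the bookkeeping ones just described: matching the sign conventions between \eqref{e:bar_rep} and \eqref{e:u_bar_rep} through the total-variation formulation, and checking that the extraneous origin mass and the harmonic term $\nu\log|z|$ affect neither the hypothesis nor the conclusion. If one preferred to avoid the splitting entirely, one could instead absorb the zero at the origin into the zero sequence $(a_k)$ and argue directly, but the decomposition above seems cleanest.
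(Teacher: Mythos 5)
Your proposal is correct and is essentially the paper's own (implicit) argument: the paper offers no separate proof of Theorem~\ref{t:4}, treating it as an immediate consequence of Theorem~\ref{t:u_finite} applied to $u=\log|f|$ with $s=q$, which is exactly the reduction you carry out. Your extra bookkeeping — the sign flip of $\psi^*$ being absorbed by the total variation $|\lambda|$, the origin mass from $z^\nu$ never meeting a Carleson box $\mathcal{C}(\vfi,\delta)$ with $\delta<1$, and the $O(1)$ term $\nu\log r$ being absorbed since $\gamma\le s+1$ — is sound and merely makes explicit what the paper leaves unsaid.
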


{\bf Remark.} Suppose that $\mu$ is 1-periodic measure on $\mathbb{R}$ finite on the compact Borel sets, and $p\ge 1$.
Then $$\biggl( \int_0^1 \Bigl(\mu(( x- \delta, x+\delta))\Bigr)^p \, dx \biggr)^\frac 1p=O(\delta ^{\frac 1p}), \quad \delta \in (0,1).$$
In fact,  assume that $\mu([0,1))=C$.  Then by Fubini's theorem
\begin{gather*}
\int_0^1 \Bigl(\mu(( x-\delta,x+ \delta))\Bigr)^p \, dx \le (2C)^{p-1}\int_0^1 \mu( (x-\delta,x+ \delta))\, dx =\\ =(2C)^{p-1}\int_0^1 \int _{(x-\delta, x+\delta)} d\mu(y) \, dx \le  \\
\le (2C)^{p-1} \int_{-\delta}^{1+\delta} d\mu (y) \int_{(y-\delta, y+\delta)} dx\le 2\delta \mu ((-\delta, 1+\delta))\le 3 (2C)^{p} \delta.
\end{gather*}
It follows from representation \eqref{e:u_bar_rep} that $\rho_1[u]\le s$. Then \eqref{e:m_p_est} implies $$\rho_p[u]\le s+1-\frac 1p.$$ It is known that this is a sharp inequality (\cite{Li_mp,Li_mp1}), in general. However, Theorems \ref{t:u_finite} and \ref{t:4} characterize classes where $\rho_p[u]$ takes a particular value. 

\smallskip

Examples in Section 4  show that the assertion of Theorem \ref{t:4} is sharp.

The following theorem provides a sharp estimate for means of canonical integrals or products in terms of growth of their Riesz measures.
\begin{theorem} \label{t:u_suf_groth_al}
Suppose that $u$ is of the form \eqref{e:can_int},
 $s \in \mathbb{N}\cup \{0 \}$, $p\in (1, \infty)$, and $\alpha>0$  are such that $\alpha+\frac 1p<s+1$.
If \eqref{e:nru_stolz} holds, then \eqref{e:u_m_p_est} is valid.
\end{theorem}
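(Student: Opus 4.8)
The goal is to deduce the mean estimate \eqref{e:u_m_p_est} from the growth hypothesis \eqref{e:nru_stolz} on $n(r,u)$, under the restriction $\alpha+\frac1p<s+1$, where $u$ is the canonical integral $U(z;\mu,s)$ built from the Weierstrass factors $E(A(z,\zeta),s)$. The natural strategy is to reduce \textbf{Theorem \ref{t:u_suf_groth_al}} to the already-established \textbf{Theorem \ref{t:u_finite}}. That is, I would show that the growth bound \eqref{e:nru_stolz} on the counting function, together with the Stolz-angle-free setting, forces the complete measure $\lambda$ defined in \eqref{e:cms} to satisfy the Carleson-box condition \eqref{e:lambda_p_mod} with exponent $\gamma=s+1-\alpha$; since $\alpha+\frac1p<s+1$ guarantees $\gamma\in(0,s+1]$ and in fact $\gamma>\frac1p>0$, Theorem \ref{t:u_finite} then yields $m_p(r,u)=O((1-r)^{\gamma-s-1})=O((1-r)^{-\alpha})$, which is exactly \eqref{e:u_m_p_est}.

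The heart of the matter is therefore the measure-theoretic estimate: controlling $\inp \lambda^p(\mathcal{C}(\vfi,\de))\,d\vfi$ purely from the radial growth $n(r,u)=O((1-r)^{-\alpha-1/p})$. First I would record that, since $u$ is a pure canonical integral with no boundary part, $d\lambda|_{\D}(\zeta)=(1-|\zeta|)^{s+1}\,d\mu(\zeta)$ and $\lambda$ carries no mass on $\partial\D$. For a fixed Carleson box $\mathcal{C}(\vfi,\de)$, the points $\zeta$ contributing satisfy $1-|\zeta|\le\de$, so the weight $(1-|\zeta|)^{s+1}$ is at most $\de^{s+1}$ on the box; hence $\lambda(\mathcal{C}(\vfi,\de))\le \de^{s+1}\mu(\mathcal{C}(\vfi,\de))$, reducing everything to controlling the averaged $p$th power of $\mu(\mathcal{C}(\vfi,\de))$. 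Here the $1$-periodic-measure remark following \textbf{Theorem \ref{t:4}} is the right template: one slices $\mu$ into dyadic annuli $A_j=\{1-2^{-j}\le|\zeta|<1-2^{-j-1}\}$ for $2^{-j}\lesssim\de$, on each of which \eqref{e:nru_stolz} bounds the total mass by $O(2^{j(\alpha+1/p)})$. Applying the Fubini/Jensen argument from that remark annulus-by-annulus gives, for each $j$, a contribution of order $\de\cdot\bigl(\text{annular density}\bigr)^{p}$, and summing the resulting geometric-type series over $j$ (convergent precisely because $\alpha+\frac1p<s+1$ keeps the exponents in the summable range) should produce $\inp \lambda^p(\mathcal{C}(\vfi,\de))\,d\vfi=O(\de^{p(s+1-\alpha)})$, i.e. \eqref{e:lambda_p_mod} with $\gamma=s+1-\alpha$.

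The main obstacle I anticipate is exactly the bookkeeping in this annular decomposition: the weight $(1-|\zeta|)^{s+1}$ varies across the box, and the counting hypothesis \eqref{e:nru_stolz} only controls the \emph{radial} accumulation of mass, giving no a priori angular spreading. Thus I must argue that within each annulus $A_j$ the mass is distributed angularly in a way compatible with the Carleson boxes of width $2\pi\de$; the periodicity-style Fubini estimate handles the angular averaging, but one has to verify that the per-annulus bound $\mu(A_j\cap\mathcal{C}(\vfi,\de))$, when raised to the $p$th power and integrated in $\vfi$, is dominated by $\de\cdot \mu(A_j)^{p}/(\text{number of disjoint boxes})^{p-1}$ uniformly, and that this is summable in $j$. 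Getting the exponents to balance so that the $j$-sum telescopes to $\de^{p(s+1-\alpha)}$ rather than a larger power is the delicate point, and it is precisely the constraint $\alpha+\frac1p<s+1$ that makes the series converge.

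Finally, once \eqref{e:lambda_p_mod} is in hand with $\gamma=s+1-\alpha$, I would note $\gamma-s-1=-\alpha$ and invoke \textbf{Theorem \ref{t:u_finite}} verbatim; no separate treatment of the boundary term is needed since $\psi^*\equiv 0$ for a canonical integral, so $|\lambda|=\lambda$ throughout. This reduction keeps the new analytic content concentrated entirely in the measure estimate, deferring all the harmonic-analysis work on the representation \eqref{e:can_int} to the already-proven theorem.
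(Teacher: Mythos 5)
Your proposal is correct, but it follows a genuinely different route from the paper. The paper proves Theorem \ref{t:u_suf_groth_al} directly (and only writes out the case $s=0$): keeping the decomposition $u=u_1+u_2$ from the proof of Theorem \ref{t:u_bound}, it bounds the near part via estimate \eqref{e:key_place} together with the crude bound $\mu\bigl(\Box(re^{i\arg\zeta},\tfrac23(1-r))\bigr)\le n\bigl(r+\tfrac23(1-r),u\bigr)$, which already yields $O((1-r)^{-\alpha p})$, and bounds the far part by combining the kernel estimate \eqref{e:k_kern_2est} with the integral Minkowski inequality and a radial integration by parts against $dn(t,u)$. You instead reduce the statement to Theorem \ref{t:u_finite} by showing that \eqref{e:nru_stolz} forces the Carleson-box condition \eqref{e:lambda_p_mod} with $\gamma=s+1-\alpha$; this implication is true and your outline of it is sound: for dyadic annuli $A_j=\{1-2^{-j}\le|\zeta|<1-2^{-j-1}\}$ with $2^{-j}\lesssim\de$ one has $\mu(A_j)=O(2^{j(\alpha+1/p)})$, the Fubini/Jensen argument of the remark after Theorem \ref{t:4} gives $\inp \mu^p\bigl(A_j\cap\mathcal{C}(\vfi,\de)\bigr)\,d\vfi=O\bigl(\de\,\mu^p(A_j)\bigr)$ with no hypothesis on angular distribution, and after inserting the weight $(1-|\zeta|)^{s+1}\le 2^{-j(s+1)}$ the resulting geometric series in $j$ converges precisely because $\alpha+\tfrac1p<s+1$, giving $O(\de^{p(s+1-\alpha)})$; since $\gamma=s+1-\alpha\in(0,s+1]$ and $|\lambda|=\lambda$ for a pure canonical integral, Theorem \ref{t:u_finite} then applies verbatim. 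Notably, this is the same mechanism the paper itself uses in the \emph{sufficiency} part of Theorem \ref{t:u_supp_stoz}, except that there the angular localization is supplied by the Stolz-angle hypothesis, while your averaging argument removes any support assumption. What each route buys: yours covers all $s$ uniformly and exhibits Theorem \ref{t:u_suf_groth_al} as a corollary of Theorem \ref{t:u_finite}; the paper's direct route stays entirely within the radial counting function and does not depend on Theorem \ref{t:u_finite}.

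Two details should be repaired when you write the measure estimate up rigorously. First, to pass from the per-annulus bounds to $\lambda(\mathcal{C}(\vfi,\de))=\sum_j\lambda\bigl(A_j\cap\mathcal{C}(\vfi,\de)\bigr)$ you must sum in the $L^p(d\vfi)$ norm using Minkowski's (triangle) inequality, not sum the integrated $p$th powers term by term; with geometric decay both computations produce the same exponent, but only the former is a legitimate step. Second, your proposed intermediate bound $\de\cdot\mu(A_j)^p/(\text{number of disjoint boxes})^{p-1}$ is false in general --- no angular equidistribution follows from \eqref{e:nru_stolz} --- but it is also unnecessary: the worst case is total angular concentration of $\mu$ on $A_j$, and that case is exactly what the Fubini bound $O(\de\,\mu^p(A_j))$ accommodates, which is all the exponent count needs.
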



\section{Kernels $K_s(z,\zeta)$ and representation of functions of finite order}
We define $$K(z, \zeta)=\frac{G(z, \zeta)}{1-|\zeta|}=\frac1{1-|\zeta|} \log \Bigl|\frac{1-z\bar\zeta}{z-\zeta}\Bigr|, \quad z\in {\D}, \zeta\in \D, z\ne \zeta,
$$ where $G(z, \zeta)$ is the Green function for $\D$.
We have the following properties of $K(z, \zeta)$, $z=re^{i\varphi}$, $\zeta=\rho e^{i\theta}$.

\begin{proposition} \label{p:k_prop} The following hold:

a) $K(z, 0)=-\log |z|$.

b) $0\le K(z, \zeta)\le \frac{1-|z|^2}{|z-\zeta|^2}$.

c) If $  D \Subset  \D$, then  uniformly in $z\in D$
$$ \lim_{\rho\uparrow 1} K(z, \rho e^{i\theta}) =\frac {1-|z|^2}{|\rho e^{i\theta}-z|^2}=P_0(ze^{-i\theta}).$$

d) $$|K(z, \zeta)|\ge \frac 1{12} \frac{1-|z|^2}{|z-\zeta|^2}, \quad 1-|\zeta|\le \frac12 (1-|z|). $$
\end{proposition}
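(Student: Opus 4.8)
The plan is to reduce all four assertions to the single algebraic identity $|1-z\bar\zeta|^2-|z-\zeta|^2=(1-|z|^2)(1-|\zeta|^2)$, which shows that $\bigl|\frac{z-\zeta}{1-z\bar\zeta}\bigr|^2=1-t$ with $t=t(z,\zeta):=\frac{(1-|z|^2)(1-|\zeta|^2)}{|1-z\bar\zeta|^2}\in[0,1)$. Hence $G(z,\zeta)=-\frac12\log(1-t)$ and $K(z,\zeta)=\frac{-\log(1-t)}{2(1-|\zeta|)}$, so that each part becomes an elementary estimate for $-\log(1-t)$. Part a) is then immediate: setting $\zeta=0$ gives $1-|\zeta|=1$ and $\bigl|\frac{1-z\bar\zeta}{z-\zeta}\bigr|=\frac1{|z|}$, whence $K(z,0)=-\log|z|$.

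For b), nonnegativity is clear because $G\ge0$ and $1-|\zeta|>0$. For the upper bound I would use the elementary inequality $-\log(1-t)\le\frac{t}{1-t}$ on $[0,1)$, together with the identity above rewritten as $|1-z\bar\zeta|^2(1-t)=|z-\zeta|^2$. This yields $-\log(1-t)\le\frac{(1-|z|^2)(1-|\zeta|^2)}{|z-\zeta|^2}$, hence $K(z,\zeta)\le\frac{(1-|z|^2)(1+|\zeta|)}{2|z-\zeta|^2}$, and $1+|\zeta|\le2$ gives the claimed bound $\frac{1-|z|^2}{|z-\zeta|^2}$.

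For c), as $\rho\uparrow1$ with $z$ ranging over a fixed $D\Subset\D$, the factor $|1-z\bar\zeta|$ stays bounded away from $0$, so $t=t(z,\rho e^{i\theta})=O(1-\rho)$ uniformly on $D$. Then $-\log(1-t)=t+O(t^2)$, and dividing by $2(1-\rho)$ leaves $\frac{(1-|z|^2)(1+\rho)}{2|1-z\bar\zeta|^2}$ plus an error that is $O(1-\rho)$ uniformly on $D$. Letting $\rho\uparrow1$ and using $|1-z\bar\zeta|\to|e^{i\theta}-z|$ and $(1+\rho)/2\to1$ gives the uniform limit $\frac{1-|z|^2}{|e^{i\theta}-z|^2}=P_0(ze^{-i\theta})$.

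The delicate part is d), where the stated constant must be extracted from crude estimates. Here I would use $-\log(1-t)\ge t$, so that $K(z,\zeta)\ge\frac{(1-|z|^2)(1+|\zeta|)}{2|1-z\bar\zeta|^2}$. The hypothesis $1-|\zeta|\le\frac12(1-|z|)$ forces $|\zeta|\ge\frac12(1+|z|)$, hence $1+|\zeta|\ge\frac32$, and also $|z-\zeta|\ge|\zeta|-|z|\ge\frac12(1-|z|)$. Using once more $|1-z\bar\zeta|^2=|z-\zeta|^2+(1-|z|^2)(1-|\zeta|^2)$ with $(1-|z|^2)(1-|\zeta|^2)\le2(1-|z|)^2\le8|z-\zeta|^2$, I obtain $|1-z\bar\zeta|^2\le9|z-\zeta|^2$. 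Combining, $K(z,\zeta)\ge\frac34\cdot\frac{1-|z|^2}{9|z-\zeta|^2}=\frac1{12}\,\frac{1-|z|^2}{|z-\zeta|^2}$, which is exactly the asserted constant. The one thing to watch is that the chain in d) stay tight enough to land on $\frac1{12}$: in particular the factor $1+|\zeta|\ge\frac32$ (not merely $\ge1$) must be kept, which is where the location hypothesis $1-|\zeta|\le\frac12(1-|z|)$ does its work.
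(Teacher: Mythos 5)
Your proof is correct, and for parts a)--c) it is in substance the paper's own argument: both rest on the identity $|1-z\bar\zeta|^2-|z-\zeta|^2=(1-|z|^2)(1-|\zeta|^2)$, which the paper uses in the equivalent form $K(z,\zeta)=\frac1{2(1-|\zeta|)}\log\bigl(1+\frac{(1-|z|^2)(1-|\zeta|^2)}{|z-\zeta|^2}\bigr)$ together with $\log(1+x)\le x$; your inequality $-\log(1-t)\le \frac{t}{1-t}$ is the same estimate after the substitution $1+x=(1-t)^{-1}$, and your uniform expansion $-\log(1-t)=t+O(t^2)$ is the paper's route to c). The genuine difference is part d): the paper gives no proof and simply cites \cite{Chy_Ska}, whereas you supply a self-contained elementary derivation --- $-\log(1-t)\ge t$, then, under the hypothesis $1-|\zeta|\le\frac12(1-|z|)$, the bounds $1+|\zeta|\ge\frac32$, $|z-\zeta|\ge\frac12(1-|z|)$, and $|1-z\bar\zeta|^2\le 9|z-\zeta|^2$ --- and your chain of constants lands exactly on the stated constant, since $\frac34\cdot\frac19=\frac1{12}$. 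What this buys is a proposition whose proof is complete within the paper and a verification that the constant $\frac1{12}$ is attainable by purely elementary means; the cost is only a few extra lines relative to the citation. One small remark: the hypothesis of d) admits $|\zeta|=1$, where your formula $K=\frac{-\log(1-t)}{2(1-|\zeta|)}$ degenerates; there one should invoke the boundary extension $K(z,e^{i\theta})=P_0(ze^{-i\theta})$, for which the inequality holds trivially with constant $1$, so nothing is lost.
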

\begin{proof}[Proof of the proposition]
b) We have
\begin{gather*}
0\le K(r e^{i\vfi}, \rho e^{i\theta})=\frac 1{2(1-\rho)} \log  \frac {1-2r\rho \cos(\varphi-\theta)+ r^2\rho^2}{r^2-2r\rho \cos(\varphi-\theta)+ \rho^2}=\\ =\frac 1{2(1-\rho)} \log \Bl 1+\frac {(1-r^2)(1-\rho^2)}{r^2-2r\rho \cos(\varphi-\theta)+ \rho^2} \Br\le \\ \le \frac1{2(1-\rho)} \frac {(1-r^2)(1-\rho^2)}{r^2-2r\rho \cos(\varphi-\theta)+\rho^2}\le \frac{1-r^2}{|re^{i\varphi} -\rho e^{i\theta}|^2} .
   \end{gather*}

c) The assertion  easily follows from the equality
$$ K(z, \zeta)=\frac1{2(1-|\zeta|)}\log \Bigl( 1+ \frac{(1-|z|^2)(1-|\zeta|^2)}{|z-\zeta|^2}\Bigr)$$
see b).


d) It is proved  in \cite{Chy_Ska}.
\end{proof}

Due to d), we set $K(z, e^{i\theta}):=P_0(ze^{-i\theta})$ preserving continuity of $K$ on $\partial \D$ with respect to the second variable.

Let $s\in \N$.
We write
$$K_s(z, \zeta)=-\frac{\log |E(A(z, \zeta), s)|}{(1-|\zeta|)^{s+1}}, \quad \zeta\in \overline{\D}, z\in \D, z\ne \zeta,$$
i.e. $K(z,\zeta)=K_0(z, \zeta)$, we set   $K_s(z,z)=-\infty$, $z\in \D$.

Let  $D^*(z, \sigma)=\{\zeta :\bigl| \frac{z-\zeta}{1-z\bar \zeta}\bigr|<\sigma\}$ be the pseudohyperbolic disc with the center $z$ and radius $\sigma\in (0,1]$.
\begin{proposition} \label{p:k_prop} The following hold:
\begin{itemize}
 \item [i)]  \begin{equation}\label{e:k_s_far}
| K_s(z, \zeta)| \le  \frac{C(s)}{|1-z\bar\zeta|^{s+1}} , \quad \zeta  \not\in D^*(z,\frac17).
 \end{equation}
\item [ii)]  \begin{equation}\label{e:k_s_close}
| K_s(z, \zeta)| \le  \frac{C}{(1-|\zeta|)^{s+1}} \log\bigl| \frac{1-z\bar \zeta}{z-\zeta}\bigr|, \quad \zeta  \in D^*(z,\frac17).
 \end{equation}
\item [iii)] If $z\in D\Subset \D$, then $$K_s(z, \rho e^{i\theta})\rightrightarrows
\frac{2^{s+1}}{s+1}\Re \frac{1}{(1-z e^{-i\theta})^{s+1}}=\frac{2^sP_s(ze^{-i\theta})}{(s+1)!}+C(s), \quad \rho \uparrow 1.$$
\end{itemize}
 \end{proposition}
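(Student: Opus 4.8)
The plan is to reduce all three assertions to the classical two‑sided estimates for the Weierstrass primary factor $E(w,s)$, evaluated at $w=A(z,\zeta)$, together with a short geometric dictionary relating $w$ to the pseudohyperbolic position of $\zeta$. Writing $\varrho=\bigl|\frac{z-\zeta}{1-z\bar\zeta}\bigr|$ for the pseudohyperbolic distance, I would first record the three facts that drive everything. Since $1-A(z,\zeta)=\frac{\bar\zeta(\zeta-z)}{1-z\bar\zeta}$ we have $|1-w|=|\zeta|\,\varrho$; since $|1-z\bar\zeta|\ge 1-|z||\zeta|\ge\frac12(1-|\zeta|^2)$ we have $|w|=\frac{1-|\zeta|^2}{|1-z\bar\zeta|}\le 2$; and $\frac{1-|\zeta|}{|1-z\bar\zeta|}=\frac{|w|}{1+|\zeta|}$, so this quantity is comparable with $|w|$. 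The role of these is that the dichotomy $\zeta\in D^*(z,\frac17)$ versus $\zeta\notin D^*(z,\frac17)$ is precisely the dichotomy ``$w$ close to $1$'' versus ``$|1-w|$ bounded below'', and the factor $(1-|\zeta|)^{-(s+1)}$ in $K_s$ becomes a factor $|w|^{s+1}$ after multiplication by $|1-z\bar\zeta|^{-(s+1)}$. For $s=0$ all of this collapses to Proposition~1\,b),\,c).

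Next I would assemble the primary‑factor estimates from $\log E(w,s)=-\sum_{k\ge s+1}w^k/k$ and $\log|E(w,s)|=\log|1-w|+\Re\sum_{k=1}^s w^k/k$. For $|w|\le\frac12$ the series gives $|\log|E(w,s)||\le\frac{|w|^{s+1}}{(s+1)(1-|w|)}\le C(s)|w|^{s+1}$. For $\frac12<|w|\le 2$ the polynomial $\Re\sum_{k=1}^s w^k/k$ is bounded by $C(s)$, so $|\log|E(w,s)||\le C(s)+\bigl|\log|1-w|\bigr|$, and the only possibly large contribution is $\log\frac1{|1-w|}$. Assertion i) now follows by splitting on $|w|$ under the hypothesis $\varrho\ge\frac17$. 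If $|w|\le\frac12$ then $|K_s|\le C(s)\frac{|w|^{s+1}}{(1-|\zeta|)^{s+1}}=C(s)\frac{(1+|\zeta|)^{s+1}}{|1-z\bar\zeta|^{s+1}}\le\frac{C(s)}{|1-z\bar\zeta|^{s+1}}$; if $\frac12<|w|\le2$ then $|w|^{s+1}\ge 2^{-(s+1)}$ and $|1-w|=|\zeta|\varrho$ is bounded below, whence $|\log|E(w,s)||\le C(s)$ and again $|K_s|\le C(s)(1-|\zeta|)^{-(s+1)}\le C(s)\,2^{s+1}|w|^{s+1}(1-|\zeta|)^{-(s+1)}\le C(s)|1-z\bar\zeta|^{-(s+1)}$, which is \eqref{e:k_s_far}.

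For ii) assume $\varrho<\frac17$; then $|1-w|=|\zeta|\varrho<\frac17$, so $|w|>\frac67$ and we are in the second range. Here $|\log|E(w,s)||\le C(s)+\log\frac1{|\zeta|}+\log\frac{|1-z\bar\zeta|}{|z-\zeta|}$, and since $\log\frac{|1-z\bar\zeta|}{|z-\zeta|}=\log\frac1\varrho>\log7>0$, the bounded additive terms are absorbed into a multiplicative constant, giving $|K_s|\le\frac{C}{(1-|\zeta|)^{s+1}}\log\bigl|\frac{1-z\bar\zeta}{z-\zeta}\bigr|$, i.e. \eqref{e:k_s_close}. For iii) take $z\in D\Subset\D$; then $|1-z\rho e^{-i\theta}|\ge1-|z|\ge\const>0$ and $|w|=\frac{1-\rho^2}{|1-z\rho e^{-i\theta}|}\to0$ uniformly as $\rho\uparrow1$. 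Using $\log E(w,s)=-\frac{w^{s+1}}{s+1}+O(|w|^{s+2})$, dividing by $(1-\rho)^{s+1}$ and taking real parts, the main term equals $\frac1{s+1}\Re\frac{(1+\rho)^{s+1}}{(1-z\rho e^{-i\theta})^{s+1}}\to\frac{2^{s+1}}{s+1}\Re\frac1{(1-ze^{-i\theta})^{s+1}}$ uniformly, the error being $O(|w|)\cdot O(1)\to0$; converting through $P_s(ze^{-i\theta})=s!\bigl(2\Re(1-ze^{-i\theta})^{-(s+1)}-1\bigr)$ produces the closed form with $C(s)=\frac{2^s}{s+1}$.

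The main obstacle is the ``close'' regime of ii): one must keep the exact power $(1-|\zeta|)^{s+1}$ while showing that everything apart from the genuine logarithmic singularity $\log\frac1\varrho$ stays bounded, and that the residual term $\log\frac1{|\zeta|}$ (coming from the factor $\bar\zeta$ in $1-A$) does not spoil the multiplicative constant. This term is harmless precisely in the boundary layer $1-|\zeta|\le\frac12(1-|z|)$ relevant to the applications, where $|\zeta|\ge\frac12$, and the same remark guarantees the lower bound on $|1-w|$ needed in i); the complementary compact region $\{|\zeta|\le\frac12\}$ is treated trivially since there $K_s$ is bounded. The same bookkeeping is what produces the additive constant $C(s)=\frac{2^s}{s+1}$ in iii), which reconciles $K_s$ with the Green‑function kernel $K=K_0$ of Proposition~1 (for $s=0$ it accounts for the extra $+1$ in $\lim_{\rho\uparrow1}\frac{-\log\rho}{1-\rho}$).
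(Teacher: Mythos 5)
Your argument is, in substance, the paper's own: the expansion $\log E(w,s)=-\sum_{k\ge s+1}w^k/k$, the dichotomy $|w|\le\frac12$ versus $\frac12<|w|\le 2$, and the identities $|1-A(z,\zeta)|=|\zeta|\bigl|\tfrac{z-\zeta}{1-z\bar\zeta}\bigr|$, $\tfrac{|A(z,\zeta)|}{1-|\zeta|}=\tfrac{1+|\zeta|}{|1-z\bar\zeta|}$ are exactly the ingredients of the paper's proof, which quotes Tsuji for the one-sided bound \eqref{e:k_s_up} and dismisses ii) with ``similar arguments''. Your part iii), including the identification $C(s)=\frac{2^s}{s+1}$, is correct and is more detailed than the paper's one-line assertion (it also tacitly corrects the paper's series \eqref{e:k_s_repr<1}, which should start at $j=s+1$, not $j=1$).

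The genuine flaw is your closing claim that the region $\{|\zeta|\le\frac12\}$ is handled trivially ``since there $K_s$ is bounded''. It is not: $A(z,0)=1$ and $E(1,s)=0$, so for fixed $z\ne 0$ one has $K_s(z,\zeta)\sim\log\frac1{|\zeta|}\to+\infty$ as $\zeta\to 0$. This is precisely the residual term $\log\frac1{|\zeta|}$ you isolated, and it cannot be absorbed anywhere: for $z=\frac12$ and $\zeta\to0$ (so $\zeta\notin D^*(z,\frac17)$) the left-hand side of i) blows up while the right-hand side stays bounded, and for $|z|=\frac1{10}$, $\zeta\to 0$ (so $\zeta\in D^*(z,\frac17)$) the same happens in ii). Thus no bookkeeping can close your gap, because estimates i) and ii) as stated for all $\zeta\in\overline{\D}$ are simply false near $\zeta=0$; the correct statement must either restrict to $|\zeta|\ge\frac12$ (your ``boundary layer'', which is all that the applications in Section 3 use, the contribution of the measure near the origin being bounded there) or carry an extra term $\log\frac1{|\zeta|}$ on the right-hand sides. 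You should say this openly rather than paper over it; note that the paper's own proof shares the defect, since \eqref{e:k_s_up} fails wherever $A(z,\zeta)$ is close to $1$ --- which happens at $\zeta=0$ as well as at $\zeta=z$ --- and the paper's display for $K_s(z,\zeta)(1-|\zeta|)^{s+1}$ silently drops the term $-\log|\zeta|$. A smaller slip in the same sentence: the complement of the layer $\{1-|\zeta|\le\frac12(1-|z|)\}$ is not $\{|\zeta|\le\frac12\}$ and is not compact (it contains points with $|\zeta|$ arbitrarily close to $1$ when $|z|\to1$), so the region bookkeeping would need repair even if the boundedness claim were true.
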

\begin{proof}[Proof of the proposition]
 The upper estimate for $K_s(z, \zeta)$
\begin{equation}\label{e:k_s_up}
K_s(z, \zeta)\le \frac{2^{s+2}}{(1-|\zeta|)^{s+1}} |A(z,\zeta)|^{s+1}\le \frac{2^{2s+3}}{|1-z\bar\zeta|^{s+1}}, z\in \D, \zeta \in \overline{\D},
\end{equation}
follows from the known estimate of the primary factor (\cite[Chap. V.10]{Tsuji}). Also,
\begin{equation}  \label{e:k_s_repr<1}
K_s(z,\zeta)=\frac{\Re \sum_{j=1}^\infty \frac1j(A(z, \zeta))^j}{(1-|\zeta|)^{s+1}},
\end{equation}
 provided that $|A(z, \zeta)|<\frac 12$, so  $$|K_s(z,\zeta)|\le  \frac{2|A(z, \zeta)|^{s+1}}{(s+1)(1-| \zeta)^{s+1}} \le \frac{2^{s+2}}{s+1} \frac{1}{|1-z\bar \zeta|^{s+1}}.$$ Hence,  it remains to consider the case when $|A(z,\zeta)|\ge \frac12$.

Since for all $z\in \D$, $\zeta \in \overline{\D}$ $|A(z, \zeta)|\le 2$,  we  have for $\zeta \not\in D^*(z, \frac17)$
 \begin{gather*}
    K_s(z, \zeta)(1-|\zeta|)^{s+1} =\log \Bigl| \frac{z-\zeta}{1-z\bar \zeta}\Bigr|+ \Re \sum_{j=1}^s \frac{A(z, \zeta)}{j}\ge -\log 7 -\sum_{j=1}^s \frac{2^j}j =\\ =-C(s)\ge -C(s)2^{s+1} |A(z, \zeta)|^{s+1}.
 \end{gather*}
 Hence,
 \begin{equation}\label{e:k_s_low_far}
 K_s(z, \zeta)\ge -\frac{C(s)}{|1-z\bar\zeta|^{s+1}}, \quad \zeta \not \in D^*(z,\frac17),
 \end{equation}
and i) follows.

Similar arguments give ii).

 Let us prove iii). If $z\in D\Subset \D$, then it follows from the representation \eqref{e:k_s_repr<1} that
 $$K_s(z, \rho e^{i\theta})\rightrightarrows
\frac{2^{s+1}}{s+1}\Re \frac{1}{(1-z e^{-i\theta})^{s+1}}=\frac{2^sP_s(ze^{-i\theta})}{(s+1)!}+C(s), \quad \rho \uparrow 1.$$
\end{proof}

Due to properties of $K_s(z, \zeta)$ the representation \eqref{e:u_bar_rep} could be rewritten in the form
(cf. \cite{Gr1}, \cite[Part II]{Gr68})
\begin{equation}
  u(z)=-\int_{\overline\D} K_s(z, \zeta) d\lambda(\zeta)+ C, \label{e:ks_rep}
\end{equation}
where $\frac{2^s}{(s+1)!} d\lambda(e^{i\theta}) =\frac1{2\pi} d\psi^*(\theta)$, and $\lambda=\lambda_u$ is defined  by \eqref{e:cms}.

Similarly,
for any $u\in \SH^\infty$ we have the following representation (cf. \eqref{e:uzob})
\begin{equation}
u(z)= -\int_{\bar{\mathbb{D}}} K(z,\zeta)\, d\lambda_u(\zeta)+ C.
\end{equation}

\smallskip 
{\bf Remark.} The idea of such representation  goes back  to  results of Martin \cite[Chap. XIV]{Br}.

\section{Proofs}


\begin{proof}[Sufficiency of Theorem \ref{t:u_bound}.]


We write
$$ u_1(z)=-\int_{D^*(z, \frac 17)}K(z, \zeta)\, d\lambda (\zeta),   \quad u_2(z)=-\int_{\bar{\mathbb{D}}\setminus D^*(z, \frac 17)}K(z, \zeta)\, d\lambda (\zeta).$$
Let us estimate $I_1=\int_{-\pi}^\pi |u_1(re^{i\varphi})|^p\, d\varphi$.

By the H\"older inequality
\begin{gather*}
 |u_1(re^{i\varphi})|=\int\limits_{D^*(re^{i\varphi}, \frac 17)}  \log \Bm \frac{1-re^{i\varphi}\bar \zeta}{re^{i\varphi}-\zeta}\Bm  d\mu(\zeta) \le  \\ \le \biggl( \int\limits_{D^*(re^{i\varphi}, \frac 17)} \Bigl( \log \Bm \frac{1-re^{i\varphi}\bar \zeta}{re^{i\varphi}-\zeta}\Bm \Bigr)^p \, d\mu(\zeta)\biggr)^\frac 1p \Bigl(\mu\Bigl(D^*\Bigl(re^{i\varphi}, \frac 17\Bigl)\Bigl)\Bigr)^{\frac{p-1}{p}},
\end{gather*}
hence
\begin{gather*}
 I_1\le \int_{-\pi}^\pi \biggl(  \int\limits_{D^*(re^{i\varphi}, \frac 17)} \Bm \log \Bm \frac{1-re^{i\varphi}\bar \zeta}{re^{i\varphi}-\zeta}\Bm \Bm^p \, d\mu(\zeta) \,
 \mu^{p-1}\Bl D^*\Bigl(re^{i\varphi}, \frac17\Bigl)\Br \biggr) \, d\varphi.
\end{gather*}
 Since $D^*(z, \frac{h}{2+h})\subset D(z, (1-|z|)h)$ (\cite{Illin}), with   $h=\frac 13$ we get $$D^*\Bigl(z, \frac 17\Bigl)\subset D\Bigl(z, (1-|z|)\frac 13\Bigl)\subset \Box\Bigl(z, \frac13(1-|z|)\Bigl),$$
 where $\Box(re^{i\varphi}, \sigma)=\{\rho e^{i\theta}: |\rho-r|\le \sigma, |\theta-\varphi|\le \sigma\}$.
Therefore, using Fubini's theorem, we deduce
\begin{gather*}
 I_1\le \int_{-\pi}^\pi \biggl( \int_{\Box(z, \frac 13 (1-r))} \Bigl( \log \Bm \frac{1-re^{i\varphi}\bar \zeta}{re^{i\varphi}-\zeta}\Bm \Bigr)^p\biggr) \mu^{p-1}\Bigl(\Box\bigl(z, \frac 13 (1-r)\bigr)\Bigr)  \, d\mu(\zeta) d\varphi\le \\ \le
\int_{-\pi}^\pi \biggl( \int_{\Box(z, \frac 13 (1-r))} \Bigl( \log \Bm \frac{1-re^{i\varphi}\bar \zeta}{re^{i\varphi}-\zeta}\Bm \Bigr)^p\biggr) \mu^{p-1}\Bl\Box\bigl(re^{i\arg\zeta}, \frac 23 (1-r)\bigr)\Br  \, d\mu(\zeta) d\varphi =\\
\le \iint\limits_{\begin{substack} {-\pi-\frac{1-r}3 \le \theta\le \pi+\frac{1-r}3\\ |\rho-r|\le \frac{1-r}3\\|\theta-\vfi|\le \frac{1-r}3} \end{substack}} \Bigl(\log \Bm \frac{1-r\rho e^{i(\varphi-\theta)}}{re^{i\varphi}-\rho e^{i\theta}}\Bm \Bigr)^p \mu^{p-1}\Bl\Box\bigl(re^{i\theta}, \frac 23 (1-r)\bigr)\Br  \, d\mu(\rho e^{i\theta}) d\varphi = \\
\le
2\int\limits_{||\zeta|-r|\le \frac 13 (1-r)}\mu^{p-1}\Bl\Box\bigl(re^{i\arg\zeta}, \frac 23 (1-r)\bigr)\Br \int_{-\pi}^\pi  \Bigl( \log \Bm \frac{1-re^{i\varphi}\bar \zeta}{re^{i\varphi}-\zeta}\Bm \Bigr)^p \, d\varphi \, d\mu(\zeta).
\end{gather*}
We know that (\cite{5}) for any $a,b \in \mathbb{C}$, and $p>1$  $$\int_{-\pi}^\pi \Bm \log \Bm \frac{a-e^{i\theta}}{b-e^{i\theta}} \Bm\Bm^p \, d\theta\le C(p)|a-b|$$  holds. Using this inequality we obtain $(r\in (\frac 12, 1))$
\begin{gather}
 I_1 
\le 4 C(p)(1-r)\int\limits_{||\zeta|-r|\le \frac 13(1-r)} \mu^{p-1}\Bl \Box \bigl(re^{i\arg\zeta}, \frac 23 (1-r)\bigr)\Br d\mu(\zeta).\label{e:key_place}
\end{gather}
In order to proceed we need the following lemma.
\begin{lemma} \label{l:mes_p_est}
 Let $\nu $ be  a $2\pi$ periodic positive Borel measure on $\mathbb{R}$, $p\ge 1$, $\delta \in (0,\pi)$. Then
\begin{equation}
 \label{e:measure_p_est}
\int_{[-\pi,  \pi)} \nu^{p-1}((\theta-\delta, \theta+\delta)) d\nu(\theta) \le \frac{2^{p+1}}{\delta} \int_{[-\pi, \pi )} \nu^p((\theta-\delta, \theta+\delta)) d\theta.
\end{equation}
\end{lemma}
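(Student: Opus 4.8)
The plan is to prove the sharper inequality with $2^{p-1}$ in place of $2^{p+1}$, starting from the right-hand side and applying Tonelli's theorem. Write $N(\theta)=\nu((\theta-\delta,\theta+\delta))$. Since $\nu$ is $2\pi$-periodic and $\delta<\pi$, both $N$ and all the integrals live naturally on the circle $\R/2\pi\Z$, so intervals of radius $\delta$ are proper arcs and no wrap-around or boundary effect occurs. First I would expand
\begin{equation*}
\int_{[-\pi,\pi)}N(x)^p\,dx=\int_{[-\pi,\pi)}N(x)^{p-1}\Bigl(\int_{(x-\delta,x+\delta)}d\nu(\theta)\Bigr)\,dx,
\end{equation*}
and, since the integrand is nonnegative and $\theta\in(x-\delta,x+\delta)\Leftrightarrow x\in(\theta-\delta,\theta+\delta)$, interchange the order of integration to obtain
\begin{equation*}
\int_{[-\pi,\pi)}N(x)^p\,dx=\int_{[-\pi,\pi)}J(\theta)\,d\nu(\theta),\qquad J(\theta):=\int_{(\theta-\delta,\theta+\delta)}N(x)^{p-1}\,dx.
\end{equation*}

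The key step is a pointwise lower bound for the inner integral $J(\theta)$. I would split the arc $(\theta-\delta,\theta+\delta)$ into the two halves $(\theta-\delta,\theta]$ and $(\theta,\theta+\delta)$; at least one of them carries half the mass, i.e. $\nu((\theta-\delta,\theta])\ge N(\theta)/2$ or $\nu((\theta,\theta+\delta))\ge N(\theta)/2$. In the first case a direct check shows that for every $x\in(\theta-\delta,\theta]$ one has $(x-\delta,x+\delta)\supseteq(\theta-\delta,\theta]$ (the two endpoint conditions $x-\delta\le\theta-\delta$ and $x+\delta>\theta$ hold precisely on this $x$-arc), whence $N(x)\ge\nu((\theta-\delta,\theta])\ge N(\theta)/2$ on a set of Lebesgue measure $\delta$; the second case is symmetric with $x\in[\theta,\theta+\delta)$. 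Since $p-1\ge0$, monotonicity of $t\mapsto t^{p-1}$ then gives in either case
\begin{equation*}
J(\theta)\ge\delta\Bigl(\tfrac12 N(\theta)\Bigr)^{p-1}=\frac{\delta}{2^{p-1}}\,N(\theta)^{p-1}.
\end{equation*}

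Substituting this into the swapped identity yields
\begin{equation*}
\int_{[-\pi,\pi)}N(x)^p\,dx\ge\frac{\delta}{2^{p-1}}\int_{[-\pi,\pi)}N(\theta)^{p-1}\,d\nu(\theta),
\end{equation*}
which rearranges to \eqref{e:measure_p_est} with the sharper constant $2^{p-1}\le 2^{p+1}$. The only delicate point is the geometric containment in the middle paragraph: it is exactly the \emph{mass-splitting} into two halves---rather than passing to the concentric half-length subinterval, which would only produce a lower bound in terms of a smaller interval---that allows the estimate for $N(x)$ to recover the \emph{full} quantity $N(\theta)$. This is where I expect the main care to be needed, including the (measure-zero, hence harmless) verification at the endpoints of the $x$-arcs.
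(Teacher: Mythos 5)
Your proof is correct, and it takes a genuinely different route from the paper's, with a sharper constant. The paper argues in two steps: it first proves a $p=1$ smoothing inequality,
$$\int_{[-\pi,\pi)} d\nu(\theta) \le \frac{2}{\delta}\int_{[-\pi,\pi)}\nu\Bigl(\Bigl[x-\frac\delta2,x+\frac\delta2\Bigr)\Bigr)\,dx,$$
by writing $1=\frac1\delta\int_{\theta-\delta/2}^{\theta+\delta/2}dx$ and swapping the order of integration (the factor $2$ absorbs the periodic overhang of the enlarged outer interval); for $p>1$ it then applies this inequality to the auxiliary measure $d\nu_1(\theta)=\nu^{p-1}((\theta-\delta,\theta+\delta))\,d\nu(\theta)$. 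The price of that bootstrap is that for $\theta\in[x-\frac\delta2,x+\frac\delta2)$ one can only dominate $\nu((\theta-\delta,\theta+\delta))$ by the enlarged mass $\nu((x-\frac{3\delta}2,x+\frac{3\delta}2))$, which must then be cut into two pieces and handled with $(a+b)^p\le 2^{p-1}(a^p+b^p)$ plus a recentering back to radius-$\delta$ intervals; this is what accumulates the constant $2^{p+1}$. You instead run a single Tonelli swap starting from the Lebesgue side, $\int N^p\,dx=\int J(\theta)\,d\nu(\theta)$ with $J(\theta)=\int_{(\theta-\delta,\theta+\delta)}N(x)^{p-1}\,dx$, and replace the entire enlargement-plus-convexity mechanism by the pointwise mass-splitting bound $J(\theta)\ge\delta\,(N(\theta)/2)^{p-1}$: one of the two halves of $(\theta-\delta,\theta+\delta)$ carries at least half the mass, and every $x$ on the corresponding side has $(x-\delta,x+\delta)$ containing that half---your endpoint check of this containment and your circle-model treatment of periodicity are both accurate. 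This yields the lemma with $2^{p-1}$ in place of $2^{p+1}$. The gain in the constant is immaterial for the application (only the order $\delta^{-1}$ matters in the proof of Theorem \ref{t:u_bound}), but your argument is shorter, avoids the auxiliary measure entirely, and is self-contained; the paper's route has the modest modular advantage that its $p=1$ inequality is isolated as a separately usable step.
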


\begin{proof}[Proof of Lemma \ref{l:mes_p_est}]  First, we prove \eqref{e:measure_p_est} for $p=1$\footnote{The author thanks Prof. Sergii Favorov for the idea of the proof of this lemma.}.

We have
\begin{gather}
 \nonumber
\int_{[-\pi, \pi)} d\nu(\theta)=\int_{[-\pi, \pi)} \frac 1\delta \int_{\theta-\frac \delta 2}^{\theta+\frac \delta 2} dx d\nu(\theta) \le
\int_{[-\pi-\frac{\delta}{2}, \pi+\frac{\delta}{2})} dx \int_{[x-\frac{\delta}2, {x+\frac\delta 2})} \frac 1\delta d\nu (\theta)= \\ =\int_{[-\pi-\frac{\delta}{2}, \pi+\frac{\delta}{2})}   \frac {\nu ([x-\frac\delta 2, x+\frac \delta 2))}\delta dx \le 2 \int_{[-\pi, \pi)}  \frac {\nu ([
x-\frac\delta 2, x+\frac \delta 2))}\delta dx\le \label{e:p=1delta2} \\
\le  2 \int_{[-\pi, \pi)}  \frac {\nu ((x-\delta , x+ \delta ))}\delta dx
. \label{e:p=1}\end{gather}

We now consider arbitrary finite $p>1$. Applying \eqref{e:p=1delta2} with $d\nu_1(\theta)=\nu^{p-1} ((\theta-\delta, \theta+\delta)) d\nu(\theta)$,  we get
\begin{gather*}
 \int\limits_{[-\pi, \pi)} \nu^{p-1}((\theta-\delta, \theta+\delta)) d\nu(\theta) =\int\limits_{[-\pi, \pi)} d\nu_1(\theta)
\le 2\int\limits_{[-\pi, \pi)} \frac{\nu_1([x-\frac \delta 2, x+\frac{\delta}{2}))}{\delta} dx =\\
= 2\int\limits_{[-\pi, \pi)}  \int_{[x-\frac\delta2, x+\frac \delta 2)} \nu^{p-1}((\theta-\delta, \theta+\delta)) d\nu(\theta) dx\le \\ \le  2\int_{[-\pi,\pi)} \frac{\nu^{p-1}((x-\frac {3\delta} 2, x+\frac{3\delta}{2})) \nu([x-\frac \delta 2, x+\frac{\delta}{2} ))}{\delta} dx\le \\
\le  2\int_{[-\pi,\pi)} \frac{\nu^p((x-\frac {3\delta} 2, x+\frac{3\delta}{2})) }{\delta} dx\le \\
\le   2\int_{[-\pi,\pi)} \frac{\nu^p((x-\frac {3\delta} 2, x)\cup [x, x+\frac{3\delta}{2})  ) }{\delta} dx\le  \\ \le 2^{p} \int_{[-\pi, \pi) } \frac{\nu^p((x-\frac{3\delta}2, x))}{\delta} dx+ 2^{p} \int_{[-\pi, \pi) } \frac{\nu^p([x, x+\frac{3\delta}2, x))}{\delta} dx\le \\
\le 2^{p+1} \int_{[-\pi, \pi)} \frac{\nu^p((x-{\delta}, x-{\delta}))}{\delta} dx.
\end{gather*} The lemma
 is proved.
\end{proof}
Let us continue the proof of the sufficiency.

We denote the nondecreasing  function  $$N_r(\theta)=\lambda (\{ \rho e^{i\alpha} : |r-\rho|\le \frac 23 (1-r), -\pi \le \alpha\le \theta\}), \quad \theta\in [-\pi, \pi). $$ We extend it on the real axis preserving monotonicity  by  $N_r(x+2\pi)-N_r(x)=N_r(2\pi)-N_r(0)$, $x\in \mathbb{R}$. Let $\nu_r$ be the corresponding Stieltjes measure on $\mathbb{R}$.
Estimate \eqref{e:key_place} can be written in the form
\begin{gather*}
I_1 \le \frac{C}{(1-r)^{p-1}}\int\limits_{||\zeta|-r|\le \frac 13(1-r)} \lambda^{p-1}\Bl\Box\bigl( re^{i\arg\zeta}, \frac 23 (1-r)\bigr) \Br d\lambda(\zeta)= \\
=\frac{C}{(1-r)^{p-1}} \int_{-\pi}^\pi \nu_r^{p-1}\Bigl(\Bigl[\theta-\frac23 (1-r), \theta+\frac 23(1-r)\Bigr]\Bigr) d\nu_r(\theta)\le \\
\le 2^{p+1} \frac{3C}{2(1-r)^p} \int_{-\pi}^\pi \nu_r^{p}\Bigl(\Bigl[\theta-\frac23 (1-r), \theta+\frac 23(1-r)\Bigl]\Bigl) d\theta\le \\
 \le \frac{C(p)}{(1-r)^{p}}
\int_{-\pi}^\pi \lambda^{p}\Bl \Box\bigl(re^{i\theta}, \frac 23 (1-r)\bigr) \Br \, d\theta \le \frac{C}{(1-r)^{p}} (1-r)^{p\gamma}.
\end{gather*}
We have used Lemma \ref{l:mes_p_est} and the assumption of the theorem on the complete measure.

Thus, we have  \begin{equation} \label{e:phi_1_est}
                    \Bigl(\int_{-\pi}^\pi |u_1(re^{i\varphi})|^p\, d\varphi \Bigr)^{\frac 1p} \le C(p) {(1-r)^{\gamma-1}}.
                   \end{equation}

Let us estimate $u_2(z)=-\int_{\mathbb{\bar D}} K(z, \zeta) d\tilde \lambda(\zeta)$, where
$d\tilde \lambda(\zeta)=\chi_{\bar {\mathbb{D}}\setminus D^*(z, \frac 17)}(\zeta) d\lambda(\zeta)$.

Since  $\mathop{\rm supp} \tilde\lambda \cap  D^*(z, \frac17)=\varnothing$, for $\zeta\not\in D^*(z, \frac17)$ we have by Proposition 1 that
\begin{equation}\label{e:k_kern_2est}
K(z, \zeta)\le \frac{49(1-|z|^2)}{|1 -z\bar \zeta|^2}.
\end{equation}
Let $E_n=E_n(re^{i\varphi})=\mathcal{C}(\varphi, 2^n(1-r))$, $n \in \N$,
 $E_0=\varnothing$.
Then for $\zeta =\rho e^{it} \in \D\setminus E_n(z)$, $n\ge 1$, we have
\begin{gather*}
    |1-\rho r e^{i(\varphi-t)}|\ge | 1- \rho  e^{i(\varphi-t)}| -\rho(1-r)\ge 2^n(1-r) -(1-r)\ge 2^{n-1}(1-r).
\end{gather*}
and $|1-\rho r e^{i(\varphi-t)}|\ge 1-r\rho \ge 1-r$ for $\zeta \in E_1(z)$.
Therefore  $(\frac 1p+\frac 1{p'}=1)$ 
\begin{gather} \nonumber
|u_2(re^{i\varphi})|^p \le
\biggl(\biggl(  \sum_{n=1}^{[\log _2 \frac 1{1-r}]} \int_{E_{n+1}\setminus E_{n}} + \int_{E_1} \biggr)  \frac{49(1-r^2)}{|1-r e^{i
 \varphi}\bar \zeta|^2} d\tilde\lambda(\zeta) \biggr)^p\le \\ \nonumber
 \le   49 ^p \biggl( \sum_{n=1}^{[\log _2 \frac 1{1-r}]} \int_{E_{n+1}\setminus E_{n}} \frac{2(1-r)}{(2^{n-1}(1-r))^2} d\tilde \lambda(\zeta)+
 \int_{E_1}   \frac{2}{1-r} d\tilde\lambda(\zeta) \biggr)^p < \\
 \le  \Bigl(\frac{400}{1-r}\Bigr)^p \sum_{n=1}^{[\log _2 \frac 1{1-r}]+1} \frac{\tilde \lambda^p (E_{n}(z))}{2^{\frac 32np}} \biggl( \sum_{n=1}^\infty \frac1{2^{\frac {np'}2}}\biggr)^{\frac p{p'}} \le \frac{C(p)}{(1-r)^p} \sum_{n=1}^\infty  \frac{\tilde \lambda^p (E_{n}(z))}{2^{\frac 32np}}.
 \label{e:2star_est}
\end{gather}

It follows from the latter inequalities and the assumption of the theorem that ($r\in [\frac 12,1)$)
\begin{gather*}  \inp |u_2(\rr\vfi)|^p\, d\vfi \le \frac {C( p)}{(1-r)^p} \sum_{n=1}^\infty \int_{0}^{2\pi} \frac{\tilde\lambda^p(E_n(re^{i\varphi}))}{2^{\frac 32 np}} \, d\varphi \le \\
\le \frac{C(p)}{(1-r)^p} \sum_{n=1}^\infty \frac{(2^n(1-r))^{p\gamma}}{2^{\frac 32 np}}=\frac {C(p)}{(1-r)^{p(1-\gamma)}} \sum_{n=1}^\infty 2^{np(\gamma -\frac 32)}= \frac{C(p,\gamma)} {(1-r)^{p(1-\gamma)}}.\end{gather*}

$$ \Bl \inp |u_2(\rr\vfi)|^p\, d\vfi \Br^{\frac 1p}\le \frac {C(\gamma, p)}{(1-r)^{1-\gamma}}, \quad r\in [0,1).$$
Sufficiency of Theorem 1 is proved.

{\it Necessity. }
Using property d) of Proposition 1, we obtain
$$ |u(\rr\theta)|\ge \int_{\mathcal{C}(\vfi, \frac{1-r}{2})} K(\rr\vfi, \zeta) \, d\lambda(\zeta)\ge
\frac 1{12} \int_{\mathcal{C}(\vfi, \frac{1-r}{2})} \frac{1-r^2}{|\rr\vfi -\zeta|^2}\, d\lambda(\zeta).$$
Elementary geometric arguments  show that $|\rr\vfi -\rho e^{i\theta}| \le |\rr\vfi -e^{i\theta}|$ for $1>\rho\ge r\ge 0$.
It then follows that
\begin{gather*}
 |u(\rr\theta)|\ge\frac 1{12} \int_{\mathcal{C}(\vfi, \frac{1-r}{2})} \frac{1-r^2}{|\rr\vfi -e^{i\theta}|^2}  d\lambda(\rho e^{i\th}) \ge \\ \ge \frac 1{3(\frac{\pi^2}{4}+1)} \frac{1-r^2}{(1-r)^2} \int_{\mathcal{C}(\vfi, \frac{1-r}{2})} d\lambda(\rho e^{i\theta})\ge \frac{\lambda(\mathcal{C}(\vfi, \frac{1-r}{2}))}{3(\frac{\pi^2}{4}+1)(1-r)}.
\end{gather*}
By the assumption of the theorem we deduce that
$$ \frac{C}{(1-r)^{(1-\gamma)p}} \ge \inp |u(\rr\vfi)| ^p \, d\vfi\ge C \frac{\inp \lambda^p (\mathcal{C}(\vfi, \frac{1-r}{2}))\, d\vfi}{(1-r)^p}.$$
Hence $\inp \lambda^p (\mathcal{C}(\vfi, \frac{1-r}{2}))\, d\vfi =O((1-r)^{\gamma p})$  as $r\uparrow 1$.
This completes the proof of necessity.
\end{proof}

\begin{proof}[Proof of Theorem 3]
Due to \eqref{e:u_lambda_p} we write 
\begin{gather*}
u(z)=-\int_{\bar{\mathbb{D}}} K_s(z,\zeta)\, d\lambda(\zeta)=
\\ =-\int_{D^*(z, \frac 17)}K_s(z, \zeta)\, d\lambda (\zeta)-\int_{\bar{\mathbb{D}}\setminus D^*(z, \frac 17)}K_s(z, \zeta)\, d\lambda (\zeta)\equiv u_1+u_2.
\end{gather*}
According to \eqref{e:k_s_close}  $$|u_1(z)| \le C(s) \int_{D^*(z, \frac 17)}  \log\Bigl| \frac{1-z\bar \zeta}{z-\zeta}\Bigr| d\mu (\zeta).$$ Its estimate repeats that for the case $s=0$.

Let us estimate $p$th means of $u_2(z)$. Using Proposition 2 we deduce (cf. proof of Theorem 1)
\begin{gather} \nonumber
|u_2(re^{i\varphi})|^p \le
\biggl(\biggl(  \sum_{n=1}^{[\log _2 \frac 1{1-r}]} \int_{E_{n+1}\setminus E_{n}} + \int_{E_1} \biggr)  \frac{C(s)}{|1-r e^{i
 \varphi}\bar \zeta|^{s+1}} |d\tilde\lambda(\zeta)| \biggr)^p\le \\ \nonumber
 \le  \frac{C}{(1-r)^{(s+1)p}}\sum_{n=1}^{[\log _2 \frac 1{1-r}]+1} \frac{|\tilde \lambda|^p (E_{n}(z))}{2^{(s+\frac12)np}} \biggl( \sum_{n=1}^\infty \frac1{2^{\frac {n{p'}}2}}\biggr)^{\frac p{p'}} \le  \\ \le \frac{C}{(1-r)^{(s+1)p}} \sum_{n=1}^\infty  \frac{|\tilde \lambda|^p (E_{n}(z))}{2^{(s+\frac 12)np}}.
 \label{e:2star_est}\nonumber
\end{gather}
It follows from the latter inequalities and the assumption of the theorem that
\begin{gather*}  \inp |u_2(\rr\vfi)|^p\, d\vfi \le \frac {C( p)}{(1-r)^{(s+1)p}} \sum_{n=1}^\infty \int_{0}^{2\pi} \frac{|\tilde\lambda|(E_n(re^{i\varphi}))}{2^{(s+\frac12)np}} \, d\varphi \le \\ \le
\frac{C(p)}{(1-r)^{(s+1)p}} \sum_{n=1}^\infty \frac{(2^n(1-r))^{p\gamma}}{2^{(s+\frac 12) np}}
 = \frac{C(p,\gamma)} {(1-r)^{p(s+1-\gamma)}}, \quad r\in \Bigl[\frac 12,1\Bigr).\end{gather*}
Finally,
$$ \Bl \inp |u_2(\rr\vfi)|^p\, d\vfi \Br^{\frac 1p}\le \frac {C(\gamma, p)}{(1-r)^{s+1-\gamma}}, \quad r\in \Bigl [\frac 12,1\Bigr).$$
\end{proof}

\begin{proof}[Proof of Theorem \ref{t:u_supp_stoz}]
   Without loss of generality we assume that $\supp \mu_u\subset \{ z\in \overline{\D}: |1-z|<2 (1-|z|) \}=: \triangle$.

   {\it Neccesity.}
  Note that $\mathcal{R}(1-\delta, 2\delta)\subset \mathcal{C}(\vfi, \delta)$ for $\vfi\in [-\delta, \delta]$. Applying
   Theorem \ref{t:u_bound} we obtain
   $$\biggl( \int_{-\delta}^\delta  \lambda^p(\mathcal{R}(1-\delta, 2\de))\, d\vfi \biggr)^{\frac 1p} =O(\delta^{1-\alpha}), \quad 0<\delta<1, $$
   or $$ \mu_u(\mathcal{R}(1-\delta,2 \de))=O(\delta^{-\alpha -\frac 1p}), \quad 0<\delta<1.$$
   Since
   \begin{equation}\label{e:triang_incl}
     \triangle \subset \overline{D(0, \frac12)}\cup \bigcup_{n=1}^\infty \mathcal{R}(1-2^{-n}, 2^{1-n})
   \end{equation}
   we deduce
   $$ n(1-2^{-k}, u)\le C\sum_{n=1}^k 2^{n(\alpha+\frac 1p)}+ C=O(2^{k(\alpha+\frac 1p)}), \quad k\in \mathbb{N},$$
and the assertion follows.

 {\it Sufficiency.}
 It follows from the assumptions that
 $$\lambda (\mathcal{R}((1-\delta)e^{i\vfi}, 4\delta))=O(\delta^{1-\alpha-\frac 1p}), \quad \delta \downarrow 0.$$
 Then
 $$ \lambda(\mathcal{C}(\vfi, \delta))\le  \lambda \Bigl(\bigcup_{n=0}^\infty \mathcal{R}(1-\frac \delta{2^{n}}e^{i\vfi}, \frac{4\delta }{2^{n}})\Bigr) \le C\sum_{n=0}^\infty \Bigl(\frac{\delta}{2^n}\Bigr)^{1-\alpha-\frac 1p}= O(\delta^{1-\alpha-\frac 1p}),  \delta \downarrow 0.$$
 Since   $\supp \mu_u\subset  \triangle$, we have
 \begin{gather*}
   \int_{-\pi}^{\pi} \lambda ^p(\mathcal{C}(\vfi, \delta))\, d\vfi = \int_{-2\pi\delta}^{2\pi\delta} \lambda ^p(\mathcal{C}(\vfi, \delta))\, d\vfi =O(\delta \delta^{p(1-\alpha -\frac 1p)})=O(\delta^{p(1-\alpha)}), 
   \delta \downarrow 0. \end{gather*}
   It remains to apply Theorem \ref{t:u_bound}.
 \end{proof}

 \begin{proof}[Proof of Theorem \ref{t:u_suf_groth_al}]
  We confine ourselves to the case $s=0$. We keep the notation from the proof of Theorem \ref{t:u_bound}.
   It follows from  estimate \eqref{e:key_place} that

   \begin{equation}\label{e:u_1_est}
   \int_{-\pi}^\pi |u_1(re^{i\varphi})|^p\, d\varphi \le (1-r)n^{p-1}\Bl(r+\frac{2}{3}(1-r),u\Br) n\Bl(r+\frac{1}{2}(1-r),u\Br)=O\Bigl((1-r)^{-\alpha p}\Bigr).
\end{equation}
Let us estimate $p$th mean of $u_2$. We use estimate \eqref{e:k_kern_2est}, integral Minkowski's  inequality (\cite[\S A1]{stein}), standard estimates,  and integration by parts
\begin{gather*}
  \Bigl(\int_{-\pi}^\pi |u_2(re^{i\varphi})|^p\, d\varphi\Bigr)^{\frac 1p} \le  C \Bigl(\int_{-\pi}^\pi\Bigl( \int_{\D} \frac{1-r^2}{|1-re^{i\vfi}\bar \zeta|^2} \, d \lambda (\zeta)\Bigr)^p\, d\varphi\Bigr)^{\frac 1p}\le \\
  \le C \int_{\D}  \Bigl(\int_{-\pi}^\pi \Bigl(\frac{1-r^2}{|1-re^{i\vfi}\bar \zeta|^2} \Bigr)^p \, d\vfi \Bigr)^{\frac 1p}\, d\lambda(\zeta)\le C \int_{\D}   \frac{1-r}{(1-r| \zeta|)^{2-\frac 1p}} \, d\lambda(\zeta)= \\
  =C(1-r) \int_0^1 \frac{(1-t)dn(t,u)}{(1-rt)^{2-\frac 1p}}\le C(1-r) \biggl(\int_0^r \frac{dn(t,u)}{(1-t)^{1-\frac 1p}} + \\ +\int_r^1 \frac{(1-t)dn(t,u)}{(1-r)^{2-\frac 1p}} \biggr)\le  \frac{C}{(1-r)^{1-\frac1p}} \int_r^1 n(t,u)dt = O((1-r)^{-\alpha}), \quad r\uparrow 1.
  \end{gather*}
  Taking into account \eqref{e:u_1_est}, we obtain desired estimate.
 \end{proof}

\section{Examples}
{\bf Example 1.}
Following Linden \cite[Lemma 1]{Li_mp1}, given $\alpha \ge 1$,  $\beta \in [0,1]$, we consider the sequence of complex numbers
\begin{equation}\label{e:akm}
a_{k,m}=(1-2^{-   k})e^{im2^{-k}}, \quad 1\le m\le [2^{k\beta}]
\end{equation}
where each of numbers (\ref{e:akm}) is counted $[2^{\alpha k}]$ times.  Then  for $\mathcal{P}(z)=\mathcal{P}(z, (a_{k,m}), s)$, where $s=\min\{ q\in \mathbb{N}: q> \alpha+\beta-1\}$
 we have (see \cite{Li_mp1})
$ n(r, \mathcal{P})\asymp \Bigl(\frac 1{1-r} \Bigr)^{\alpha+\beta}$, $ \nu(r, \mathcal{P})\asymp \Bigl(\frac 1{1-r} \Bigr)^{\alpha}, \quad r\uparrow1.$
Therefore, by Theorem A \cite{L_rep} $\rho_M[\mathcal{P}]=\alpha$. In \cite{Li_mp1}  it is proved that
$\rho_p[\log|\mathcal{P}|]=\alpha+\frac{\beta-1}p$. We are going to prove that
\begin{equation}\label{e:lambda_p_lower-est}
    \biggl( \inp \lambda^p(\mathcal{C}(\vfi, \de))\, d\vfi \biggr)^{\frac 1p}\ge C(\delta^{s+1-\alpha-\frac{\beta-1}p}), \quad \delta \downarrow 0.
\end{equation}
It would imply that restriction  \eqref{e:lambda_p_mod} could not be weakened.

We first assume that $\beta\in (0,1)$.
Given $\delta \in (0,\delta_0)$ we define $\varphi_\delta=\delta^{1-\beta}-\pi\delta$, where $\delta_0$ is chosen such that $\varphi_\delta>0$.
Note that $\vfi_\delta\sim \delta^{1-\beta}$, $\delta \downarrow 0$.
According to the definition of $\mathcal{C}(\varphi, \delta)$,  $a_{k,m}\in \mathcal{C}(\varphi, \delta)$ if and only if
\begin{equation}\label{e:belong_cfi_deta}
    1-|a_{k,m}|=2^{-k}\le \delta, \quad \varphi -\pi \delta\le  m 2^{-k} \le  \varphi+\pi \delta.
\end{equation}
Let $G(\varphi, \delta)$ denote the set of ${(k,m)}$ such that \eqref{e:belong_cfi_deta} is valid.
It is easy to check that for $\varphi\in (0, \varphi_\delta)$ the set $G(\varphi, \delta)$ is not empty.
Let $$k_1(\varphi)=\min\{ k: 2^{-k} [2^{\beta k}]\le \varphi+ \pi \delta\},$$ where  $\varphi\in (0, \varphi_\delta)$.
Since $k_1(\varphi)$ tends to infinity uniformly with respect to $\varphi \in (0, \varphi_\delta)$ as $\delta \downarrow 0$, one can choose $\delta_1$ so small that for all   $\delta \in (0, \delta_1)$,  $ \varphi\in (0, \varphi_\delta)$ and $k\ge k_1(\varphi)$ the inequality $\frac{2^{-\beta k}}{ (1-\beta)(1-2^{\beta k})\log 2}\le 1$ holds.
Under this assumptions we deduce subsequently from the definition of $k_1=k_1(\varphi)$ that
$$|2^{k_1}(\varphi+\pi \delta)- 2^{\beta k_1}|< 1,$$
\begin{equation}\label{e:k_1_asym}
\frac {1-2^{-\beta k_1}}{\varphi+\pi \delta}< 2^{k_1(1-\beta)} < \frac {1+2^{-\beta k_1}}{\varphi+\pi \delta},
\end{equation}
$$ |k_1- \frac{1}{1-\beta} \log_2 \frac 1{\varphi+\pi \delta}|< 1.$$
It follows from the definition of $\vfi_\delta$ and  \eqref{e:k_1_asym} that
\begin{equation}\label{e:k_1low_est}
2^{k_1} >\frac{(1-2^{-k_1\beta})^{\frac 1{1-\beta}}}\delta>\frac2{\pi\delta} , \quad 0<\vfi <\vfi_\delta,
\end{equation}
Then, according to \eqref{e:belong_cfi_deta}, \eqref{e:k_1low_est} for $\delta\in (0, \min\{\delta_0, \delta_1\})$ and $\vfi\in (\frac12 \vfi_\delta, \vfi_\delta)$,   $\delta\downarrow 0$
\begin{gather}\nonumber
    \lambda(\mathcal{C}(\varphi, \delta))=\sum_{(k,m)\in G(\varphi, \delta)} [2^{\alpha k}] 2^{-k(s+1)} \ge     \sum_{m=[2^{k_1} (\varphi-\pi \delta )]+1}^ {[2^{k_1} (\varphi+\pi \delta )]} [2^{\alpha k_1}] 2^{-k_1(s+1)}\ge \\ \ge [2^{\alpha k_1}] 2^{-k_1(s+1)} (2^{k_1}  2\pi \delta-2)\ge [2^{\alpha k_1}] 2^{-k_1s}  \pi \delta\ge \frac {\pi\delta} 2 2^{(\alpha -s) k_1} \sim \frac{\pi \delta}{2} \Bigl(\frac 1\varphi\Bigr)^{\frac{\alpha -s}{1-\beta}}.  \label{e:com_meas_low_est}
    \end{gather}
It follows from the last estimate that
\begin{gather*}
\biggl(    \int_0^{2\pi}(\lambda (\mathcal{C}(\varphi, \delta)))^p d\varphi\biggr)^{\frac 1p} \ge \frac {\pi\delta} 2 \biggl(\int_{\varphi _\delta/2}^{\varphi _\delta} ( \varphi ^{\frac{s-\alpha}{1-\beta}})^p d\varphi\biggr)^{\frac1p} = \\
=\frac{\pi}{2(\frac{s-\alpha}{1-\beta}p+1)} \delta \varphi ^{\frac{s-\alpha}{1-\beta} +\frac 1p}\Bigr|^{\varphi _\delta}_{\varphi _\delta/2}\sim C(s, \alpha, p)\delta ^{1+s -\alpha-\frac{\beta-1}p}, \quad \delta\downarrow 0.
\end{gather*}

In the case $\beta=1$ the arguments could be simplified. By the  choice of $s$, $s>\alpha$. For $0< \vfi\le \frac 12$ , according to \eqref{e:belong_cfi_deta} we deduce
\begin{gather*}
  \lambda(\mathcal{C}(\varphi, \delta))=\sum_{k=[\log_2 \frac 1\delta]+1}^\infty    \sum_{m=[2^{k} (\varphi-\pi \delta )]+1}^{[2^{k} (\varphi+\pi \delta )]} [2^{\alpha k}] 2^{-k(s+1)}
   \ge\\ \ge \sum_{k=[\log_2 \frac 1\delta]+1}^\infty  [2^{\alpha k}] 2^{-k(s+1)}(2^{k}  2\pi \delta-2) \ge
   \sum_{k=[\log_2 \frac 1\delta]+1}^\infty  [2^{\alpha k}] 2^{-ks}\pi \delta\ge \\ \ge   \frac {\pi\delta} 2 \sum_{k=[\log_2 \frac 1\delta]+1}^\infty  2^{(\alpha -s) k} \asymp  \delta^{1+s-\alpha}.
   \end{gather*}
      Hence,
   \begin{gather*}
\biggl(    \int_0^{2\pi}(\lambda (C(\varphi, \delta)))^p d\varphi\biggr)^{\frac 1p} \ge  C(s, \alpha, p)\delta ^{1+s -\alpha}, \quad \delta\downarrow 0.
\end{gather*}
If $\beta=0$, then all zeros $a_k=1-2^{-k}$ are located  on $[0,1)$, and $s>\alpha-1$. For $\vfi\in (-\pi\delta, \pi\delta)$ we then have
according to \eqref{e:belong_cfi_deta}
\begin{gather*}
  \lambda(C(\varphi, \delta))=\sum_{k=[\log_2 \frac 1\delta]+1}^\infty    \ [2^{\alpha k}] 2^{-k(s+1)}
 \ge  C\sum_{k=[\log_2 \frac 1\delta]+1}^\infty  2^{(\alpha -s-1) k} \asymp  \delta^{1+s-\alpha}.
   \end{gather*}
Then
\begin{gather*} \biggl(    \int_0^{2\pi}(\lambda (C(\varphi, \delta)))^p d\varphi\biggr)^{\frac 1p} \ge C \delta ^{1+s -\alpha} \Bigl(\int_{-\pi \delta}^{\pi \delta}  d\vfi \Bigr)^\frac{1}{p} = C(s, \alpha, p) \delta ^{1+s -\alpha+\frac 1p}, \quad \delta\downarrow 0
\end{gather*}
as required.

\medskip
{\bf Example 2.}
Let $f(z)=\exp\Bigl\{\Bigl(\frac{1}{1-z}\Bigr)^{q+1}\Bigr\}$, $q>-1$, $f(0)=e$.
In this case $f(z)$ is of the form \eqref{e:bar_rep} with $(a_k)=\varnothing $, $\psi^*(\theta)=H(\theta)m_0$, $m_0>0$, where $H(\theta)$ the Heaviside function, i.e. $\lambda(\zeta) =m_0 \delta(\zeta-1)$. It is easy to check that
\begin{gather*} \biggl(    \int_0^{2\pi}(\lambda (C(\varphi, \delta)))^p d\varphi\biggr)^{\frac 1p}  =m_0 (2\pi \delta) ^\frac{1}{p},
\end{gather*}
and $m_p(r, \log|f|)\asymp (1-r)^{\frac 1p-q-1}$.

%

\end{document}